\theoremstyle{plain}
\newtheorem{thm}{Theorem}[section]
\newtheorem{cor}[thm]{Corollary}
\newtheorem{lem}[thm]{Lemma}
\theoremstyle{definition}
\theoremstyle{remark}
\numberwithin{equation}{section}
\newcommand{\z}{z}
\begin{document}
\title{Two-term, asymptotically sharp estimates for eigenvalue means of the Laplacian}%
\author{Evans M. Harrell II and Joachim Stubbe}%
\begin{center}
Two-term, asymptotically sharp estimates for eigenvalue means of the Laplacian

\bigskip
Evans M. Harrell II\\ School of Mathematics, Georgia Institute of Technology\\
 Atlanta GA 30332-0160, USA\\
{\tt harrell@math.gatech.edu}

\bigskip
Joachim Stubbe\\
 EPFL, MATHGEOM-FSB, Station 8, CH-1015 Lausanne, Switzerland\\
{\tt Joachim.Stubbe@epfl.ch}
\end{center}%
%\thanks{}%
%\subjclass{subjects}%
%\keywords{Neumann Laplacian, Dirichlet Laplcian, semiclassical bounds for eigenvalues}

\date{11 June, 2016}%
%\dedicatory{}%
%\commby{}%
% ----------------------------------------------------------------
\begin{abstract}
We present asymptotically sharp inequalities for the eigenvalues
$\mu_k$ of the Laplacian on
a domain with Neumann boundary conditions, using the averaged
variational principle introduced in \cite{HaSt14}.  For the Riesz
mean $R_1(z)$ of the eigenvalues
we improve the known sharp semiclassical bound in terms of the volume of the domain
with a second term with the best possible expected power of $z$.

In addition, we obtain two-sided bounds for individual $\mu_k$, which are semiclassically sharp.  In a final section, we
remark upon the Dirichlet case with the same methods.

\bigskip
\noindent
{\it Key words:} Neumann Laplacian, Dirichlet Laplcian, semiclassical bounds for eigenvalues

\bigskip
\noindent
{\it 2010 Mathematics Subject Classification:  58J50, 47F05,47A75}

\end{abstract}
\maketitle
\setcounter{page}{1}
\tableofcontents
% ----------------------------------------------------------------
\section{Introduction} Let $\Omega\subset\mathbb{R}^d$ be a bounded domain with boundary $\partial\Omega$.
We mainly consider here the eigenvalue problem for the Laplacian with
Neumann boundary conditions,
\begin{equation}\label{Neumann-ev-problem}
\begin{split}
-\Delta u&= \mu u \quad \text{on }\Omega,\\
\frac{\partial u}{\partial n} &= 0 \quad \;\;\text{ on }\partial\Omega.
\end{split}
\end{equation}
The spectrum \eqref{Neumann-ev-problem} consists of an ordered
sequence of eigenvalues $\mu_j$ tending to infinity,
\begin{equation}\label{Neumann-eigenvalues}
  0=\mu_1<\mu_2\leq\mu_3\leq \cdots
\end{equation}
The corresponding normalized eigenfunctions are denoted $u_j$.
Neumann eigenvalues satisfy the same Weyl asymptotic relation as the
better studied Dirichlet eigenvalues, {\em viz}.,
\begin{equation}\label{ev-asymptotics}
  \underset{j\to\infty}{\lim}\mu_jj^{\,-2/d}=C_d|\Omega|^{-2/d},
\end{equation}
where $|\Omega|$ denotes the volume of $\Omega$ and the ``classical constant'' $C_d$ is given by
\begin{equation}\label{classical-constant}
  C_d=(2\pi)^2B_d^{-2/d},
\end{equation}
where $\displaystyle B_d=\frac{\pi^{d/2}}{\Gamma(1+d/2)}$ is
the volume of the $d$-dimensional unit ball.
In 1961, P\'olya showed that
\begin{equation}\label{Neumann-Polya}
  \mu_j\leq C_d|\Omega|^{-2/d}(j-1)^{-2/d}
\end{equation}
for all positive integers $j$ when $\Omega$ is any tiling domain of $\mathbb{R}^d$,
{and the opposite inequality in the Dirichlet case, for which we denote the eigenvalues
$\lambda_j$:
\begin{equation}\label{Dirichlet-Polya}
  \lambda_j \geq C_d|\Omega|^{-2/d}j^{-2/d}.
\end{equation}
His still unproven conjecture is that
these inequalities hold for all bounded domains} $\Omega\subset\mathbb{R}^d$. In other words the Weyl limit \eqref{ev-asymptotics} is approached from below in the Neumann case and above for Dirichlet.

Whereas there are universal relations among eigenvalues of
the Dirichlet problem, for the
Neumann problem,
Colin-de-Verdi\`{e}re showed in 1987 \cite{CdV}
that for any finite nondecreasing $0=\mu_1<\mu_2\leq\mu_3\leq \cdots\leq \mu_k$, there exists a bounded domain having these values as the
first $k$ eigenvalues.
Therefore inequalities among Neumann eigenvalues must incorporate geometric properties of $\Omega$ to be of interest.
(See, e.g.\ \cite{Ash1,AsHe,Ash2} for discussions of universal eigenvalue bounds and related references.)

{Other convenient ways to study the spectrum rely on the counting function,
\begin{equation}\label{counting function}
  \mathcal{N}(\mu):=\sharp\{\mu_j: \mu_j<\mu\},
\end{equation}
and, in a tradition going back to Berezin \cite{Ber}, Riesz means, $R_\sigma(z) :=
\sum_{j}(z-\mu_{j})_{+}^\sigma$, or, resp., $\sum_{j}(z-\lambda_{j})_{+}^\sigma$. {Here $x_{+}$ denotes the positive part of $x$.}
$\mathcal{N}(z)$ can be interpreted as the limit of $R_\sigma(z)$ when $\sigma \to 0$.
For instance, Berezin proved the equivalent of the summed
version of \eqref{Dirichlet-Polya} in the Riesz-mean form,}
\begin{equation}\label{Berezin-ineq}
  \sum_{j}(z-\lambda_{j})_{+}\leq L_{1,d}^{cl}\,|\Omega|z^{1+d/2},
\end{equation}
where
\begin{equation}\label{L-gamma-class}
  L_{\gamma,d}^{cl}:=   \frac{\Gamma(\gamma +1)}{(4\pi)^{d/2}\Gamma(\gamma+1+d/2)}.
\end{equation}
In recent years, beginning with a paper by Melas \cite{Mel}, there has arisen an industry to improve \eqref{Berezin-ineq} by including further terms in lower powers of $z$.  An improvement incorporating
the best expected succeeding power, $z^{d-\frac 1 2}$
was obtained in the Dirichlet case
by Geisinger-Laptev-Weidl \cite{GeLaWe}, and we refer to that paper for further background.

Our main goal here is to achieve analogous improvements in Riesz-means for Neumann eigenvalues
in terms of $z$ to the expected powers.  In addition, we obtain two-sided bounds for individual eigenvalues $\mu_k$, which are semiclassically sharp.  For this we rely on the averaged
variational introduced in \cite{HaSt14} and a series of analytic inequalities.  In a final section, we also treat the Dirichlet case with the same methods.  An appendix contains a discussion of improvements to Young's and H\"older's inequalities.

An important step towards P\'olya's conjecture in the Neumann case
was taken in 1991 by Kr\"{o}ger, who by applying a variational estimate
for the sum of the first $k$ eigenvalues,
obtained the asymptotically sharp inequality
\begin{equation}\label{Kroeger-ev-sum}
  \frac{d+2}{d}\sum_{j=1}^k\mu_j\leq C_d|\Omega|^{-2/d}k^{1-2/d}.
\end{equation}
Later, using the Fourier transforms of the eigenfunctions $u_j$, Laptev proved the
Riesz-mean inequality equivalent to Kr\"{o}ger's estimate \eqref{Kroeger-ev-sum},
\begin{equation}\label{Laptev-Riesz-mean}
  \sum_{j}(z-\mu_{j})_{+}\geq L_{1,d}^{cl}\,|\Omega|z^{1+d/2},
\end{equation}
 for all $z\geq 0$.
(See also \cite{Lap2012}.)
%There it was shown that Kr\"{o}ger's inequality \eqref{Kroeger-sum} holds with a multiplicative,
%non-explicit constant $C(k,d,\Omega)<1$.

Our first result is an improvement of \eqref{Kroeger-ev-sum} using
a refinement of Young's inequality for real numbers, which
not only improves the estimates of Riesz means and sums, but also
provides a bound on individual eigenvalues.
It will be useful to introduce the following notation.
\begin{equation}\label{Kroeger-sum-abbreviations}
  m_k:= C_d\left(\frac{k}{|\Omega|}\right)^{2/d}, \quad S_k:=\frac{\frac{d+2}{d}\frac{1}{k}\sum_{j=1}^k\mu_j}{m_k}.
\end{equation}
In these terms $m_k$ is the Weyl expression, and Kr\"{o}ger's inequality \eqref{Kroeger-ev-sum}
is expressed as $S_k\leq 1$. We shall prove the following refinement of Kr\"{o}ger's inequality.
\begin{thm} Let $d\geq 2$. Then for all $k\geq 0$ the Neumann eigenvalue $\mu_{k+1}$ satisfies
\begin{equation}\label{Neumann-universal-1}
  m_k^2(1-S_k) \geq (\mu_{k+1}-m_k)^2.
\end{equation}
I.e.,
\begin{equation}\label{Neumann-Laplacian-ev-bound-normalized3}
  m_k\left(1-\sqrt{1-S_k}\right)\leq \mu_{k+1} \leq m_k\left(1+\sqrt{1-S_k}\right).
\end{equation}
\end{thm}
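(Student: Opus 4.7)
My plan is to apply the averaged variational principle of \cite{HaSt14} with Kr\"oger's plane-wave trial family, reduce the resulting estimate to a single one-parameter scalar inequality, optimize in that parameter, and then invert the optimized inequality using a refinement of Young's inequality.

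Taking the test family $f_\xi(x)=e^{i\xi\cdot x}$ for $\xi$ in the Euclidean ball of radius $R$, I would compute
\begin{equation*}
\int_{|\xi|\le R}\|f_\xi\|^2\,d\xi=|\Omega|B_dR^d,\quad \int_{|\xi|\le R}\langle f_\xi,-\Delta f_\xi\rangle\,d\xi=\tfrac{d}{d+2}|\Omega|B_dR^{d+2},
\end{equation*}
and use the Plancherel bound $\int_{|\xi|\le R}|\langle u_j,f_\xi\rangle|^2\,d\xi\le(2\pi)^d$. Feeding these into the averaged variational principle and estimating $\sum_{j}(\mu_{k+1}-\mu_j)\int|\langle u_j,f_\xi\rangle|^2\,d\xi\le(2\pi)^d\sum_{j}(\mu_{k+1}-\mu_j)$ (valid since $\mu_{k+1}\ge\mu_j$ for $j\le k$) yields the Kr\"oger-type bound
\begin{equation*}
\mu_{k+1}\bigl(|\Omega|B_dR^d-k(2\pi)^d\bigr)\le\tfrac{d}{d+2}|\Omega|B_dR^{d+2}-(2\pi)^d\sum_{j=1}^k\mu_j,
\end{equation*}
valid for every $R>0$ irrespective of the sign of the left-hand coefficient, since the underlying inequality descends from the non-negativity of a quadratic form. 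With $t=R^2/m_k$, $x=\mu_{k+1}/m_k$, and the identity $|\Omega|B_dm_k^{d/2}=(2\pi)^dk$, this normalizes to $x(t^{d/2}-1)\le \tfrac{d}{d+2}(t^{1+d/2}-S_k)$ for every $t>0$.

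I would then optimize in $t$: the gap $G(t):=\tfrac{d}{d+2}(t^{1+d/2}-S_k)-x(t^{d/2}-1)$ satisfies $G'(t)=\tfrac{d}{2}t^{d/2-1}(t-x)$, so its global minimum on $(0,\infty)$ is at $t=x$, and evaluating $G(x)\ge 0$ sharpens the estimate to
\begin{equation*}
h(x):=\tfrac{2}{d+2}x^{1+d/2}+\tfrac{d}{d+2}-x\le\tfrac{d}{d+2}(1-S_k).
\end{equation*}
The theorem then follows from a refined Young inequality
\begin{equation*}
h(x)\ge\tfrac{d}{d+2}(x-1)^2\qquad(x\ge 0,\;d\ge 2),
\end{equation*}
which, after clearing denominators and dividing by $x$, reduces to the tangent-line inequality $x^{d/2}\ge 1+\tfrac{d}{2}(x-1)$, immediate from the convexity of $x\mapsto x^{d/2}$ on $(0,\infty)$. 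Combining the two bounds on $h(x)$ yields $(x-1)^2\le 1-S_k$, which is precisely \eqref{Neumann-universal-1}, and solving the resulting quadratic in $x$ produces \eqref{Neumann-Laplacian-ev-bound-normalized3}.

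The principal obstacle is calibrating the refined Young bound: the standard Young inequality $h(x)\ge 0$ merely recovers Kr\"oger's estimate $S_k\le 1$, while the quadratic correction has to appear with precisely the coefficient $\tfrac{d}{d+2}$ so that the two prefactors $\tfrac{d}{d+2}$ (one from optimizing in $t$, the other from the Young refinement) cancel cleanly and leave $(x-1)^2\le 1-S_k$ with no constant loss. Convexity of $x^{d/2}$ supplies exactly this coefficient, with equality everywhere for $d=2$ and at the endpoint $x=0$ for $d>2$, which explains why the same clean two-sided bound persists uniformly across dimensions $d\ge 2$.
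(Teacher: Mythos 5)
Your proposal is correct and follows essentially the same route as the paper: the same averaged-variational-principle setup with the plane-wave family, the same normalization substitution ($t=R^2/m_k$ is exactly the paper's $x$ with $R^d=m_k^{d/2}x^{d/2}$), the same choice $t=\mu_{k+1}/m_k$ (which you justify as the global minimizer of the gap, a step the paper leaves implicit by simply announcing the choice), and the same key inequality, since your $h(x)\geq\tfrac{d}{d+2}(x-1)^2$ is, after multiplying by $-(d+2)/2$, precisely the paper's $g_{d/2}(x)=y_{d/2}(x)+\tfrac d2(x-1)^2\leq 0$ from the appendix, and your tangent-line verification $x^{d/2}\geq 1+\tfrac d2(x-1)$ is just the paper's factorization $g_p(x)=x\,y_{p-1}(x)\leq 0$ written out.
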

Kr\"oger's bound corresponds to replacing the right side of \eqref{Neumann-universal-1} by 0.
One may further ask whether there is an additional remainder term improving the right
side of the universal inequality \eqref{Neumann-universal-1}, which contains more explicit information on the geometry of $\Omega$.
The asymptotic expansion of the counting function suggests that
under sufficient regularity conditions the $(d-1)$-dimensional volume of the boundary $\partial\Omega$ (see \cite{Iv1980,Mel1980}) may appear:
\begin{equation}\label{count-fct-expansion}
  \mathcal{N}(\mu)\approx C_d^{d/2}|\Omega|\mu^{d/2} +\frac1{4}\,C_{d-1}^{(d-1)/2}|\partial\Omega|\mu^{(d-1)/2},
\end{equation}
and therefore, for the Riesz mean,
\begin{equation}\label{R-1-expansion}
 R_1(\z):=\sum_{j=1}(\z-\mu_j)_{+}\approx L_{1,d}^{cl}|\Omega|z^{1+d/2}+\frac1{4}\,L_{1,d-1}^{cl}|\partial\Omega| \z^{(d+1)/2}.
\end{equation}
In the present paper we present a two-term bound for $R_1(\mu)$, using additional geometrical information on $\Omega$.
To this end, for any unit vector ${\bf v}\in \mathbb{R}^d$ let $\delta_{\bf v}$ be the width of $\Omega$ in the ${\bf v}$-direction,
that is,
\begin{equation}\label{v-diameter}
  \delta_{\bf v}(\Omega):=\sup \{{\bf v}\cdot({\bf x}-{\bf y}): {\bf x},{\bf y}\in\Omega\}=\max\{v\cdot({\bf x}-{\bf y}): {\bf x},{\bf y}\in\partial\Omega\}.
\end{equation}
We note that $\delta_{\bf v}(\Omega)$ always lies between twice the inradius and the diameter of $\Omega$. We prove the following.

\begin{thm}\label{Riesz-mean-thm} Let $d\geq 2$. Then for each unit vector ${\bf v}\in \mathbb{R}^d$ and for all $z \geq 0$
\begin{equation}\label{Neumann-Riesz-mean-two-term-bound}
\begin{split}
  \sum(\z-\mu_j)_{+}\geq & L_{1,d}^{cl}|\Omega|\z^{\frac d 2 + 1}+\frac1{4}\,L_{1,d-1}^{cl}\frac{|\Omega|}{\delta_{\bf v}(\Omega)}\,\z^{\frac d 2 + \frac 1 2}\\
&-\frac{1}{96}\,(2\pi)^{2-d}B_{d}\frac{|\Omega|}{\delta_{\bf v}(\Omega)^2}\,\z^{\frac d 2}.\\
\end{split}
\end{equation}
Together with the with the semiclassical bound \eqref{Laptev-Riesz-mean} this implies the improved estimate
\begin{equation}\label{Neumann-Riesz-mean-two-term-bound-bis}
\begin{split}
  \sum(\z-\mu_j)_{+}\geq & L_{1,d}^{cl}|\Omega|\z^{\frac d 2 + 1}\\
&+\Big(\frac1{4}\,L_{1,d-1}^{cl}\frac{|\Omega|}{\delta_{\bf v}(\Omega)}\,\z^{\frac d 2 + \frac 1 2}-\frac{1}{96}\,(2\pi)^{2-d}B_{d}\frac{|\Omega|}{\delta_{\bf v}(\Omega)^2}\,\z^{\frac d 2}\Big)_{+}.\\
\end{split}
\end{equation}
\end{thm}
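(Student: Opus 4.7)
The plan is to apply the averaged variational principle (AVP) of \cite{HaSt14} to a family of trial functions adapted to the preferred direction $\mathbf{v}$, thereby reducing matters to a sharp one-dimensional sum-versus-integral comparison. Without loss of generality, rotate so that $\mathbf{v}$ is the $d$-th coordinate direction and translate so that $\Omega\subset\mathbb{R}^{d-1}\times[0,\delta]$ with $\delta=\delta_{\mathbf{v}}(\Omega)$. As trial functions I take $\phi_{\xi',n}(x)=e^{i\xi'\cdot x'}\psi_n(x_d)\mathbf{1}_\Omega(x)$, indexed by $\xi'\in\mathbb{R}^{d-1}$ and $n\in\mathbb{Z}_{\geq 0}$, where $\{\psi_n\}$ is the orthonormal Neumann eigenbasis of $-d^2/dx_d^2$ on $[0,\delta]$, with eigenvalues $n^2\pi^2/\delta^2$.

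Three ingredients are then required for the AVP: (i) the quadratic form $\int_\Omega |\nabla\phi_{\xi',n}|^2$, which decomposes as $|\xi'|^2\|\phi_{\xi',n}\|^2$ plus the one-dimensional Dirichlet form of $\psi_n$ weighted by the slice area $A(x_d):=|\{x'\colon (x',x_d)\in\Omega\}|$; (ii) the norm $\|\phi_{\xi',n}\|^2=\int_\Omega\psi_n(x_d)^2\,dx$, which equals $|\Omega|/\delta$ for $n=0$ and equals $|\Omega|/\delta$ plus an oscillatory correction proportional to a Fourier coefficient of $A$ for $n\geq 1$; and (iii) the Parseval identity $\sum_{n\geq 0}\int_{\mathbb{R}^{d-1}}|\langle u_j,\phi_{\xi',n}\rangle|^2\,d\xi'=(2\pi)^{d-1}$, which follows from orthonormality of $\{\psi_n\}$ on $[0,\delta]$ together with Plancherel in $\xi'$. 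Substituting these into the AVP and absorbing the area-oscillation corrections gives a bound of the form
\[
R_1(z)\geq \frac{|\Omega|}{\delta(2\pi)^{d-1}}\int_{\mathbb{R}^{d-1}}S(z-|\xi'|^2)\,d\xi',\qquad S(T):=\sum_{n=0}^{\infty}\bigl(T-n^2\pi^2/\delta^2\bigr)_+ .
\]
The crux is a sharp Euler--Maclaurin-type lower bound on $S(T)$, uniform in $T\geq 0$, of shape $S(T)\geq (2\delta/(3\pi))T^{3/2}+aT-c\delta^{-1}T^{1/2}$ with constants calibrated to produce $\tfrac14$ and $\tfrac1{96}$ in the theorem. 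Since $f(x)=(T-x^2\pi^2/\delta^2)_+$ has vanishing derivatives of order $\geq 3$, classical Euler--Maclaurin is exact at the integer cutoffs $\sqrt{T}\delta/\pi\in\mathbb{Z}$; upgrading this to a clean \emph{uniform} inequality is what the refinements of Young's and H\"older's inequalities in the appendix are for.

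With the 1D inequality in hand, each of the terms $(z-|\xi'|^2)_+^{\alpha}$ for $\alpha\in\{3/2,\,1,\,1/2\}$ is integrated over $\xi'\in\mathbb{R}^{d-1}$ via the Beta-function identity $\int_{\mathbb{R}^{d-1}}(z-|\xi'|^2)_+^{\alpha}\,d\xi'=\pi^{(d-1)/2}\Gamma(\alpha+1)z^{\alpha+(d-1)/2}/\Gamma(\alpha+(d+1)/2)$, producing the powers $z^{1+d/2}$, $z^{(d+1)/2}$ and $z^{d/2}$; the constants match the stated $L^{cl}_{1,d}$, $\tfrac14L^{cl}_{1,d-1}$ and $\tfrac{1}{96}(2\pi)^{2-d}B_d$ via \eqref{L-gamma-class} together with $B_d=\pi^{d/2}/\Gamma(1+d/2)$. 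The improved form \eqref{Neumann-Riesz-mean-two-term-bound-bis} is then immediate from \eqref{Neumann-Riesz-mean-two-term-bound} by comparison with Laptev's bound \eqref{Laptev-Riesz-mean}: whenever the bracketed expression in \eqref{Neumann-Riesz-mean-two-term-bound-bis} is negative, \eqref{Laptev-Riesz-mean} already dominates. The main technical obstacles are making the 1D inequality sharp uniformly in $T$ with the precise constants stated, and controlling the area-oscillation corrections in the AVP step without eroding those constants.
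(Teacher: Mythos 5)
Your outline follows the same route as the paper---AVP with trial functions $e^{i\xi'\cdot x'}\psi_n(v\cdot x)$, a sharp one-dimensional Riesz-mean inequality, then integration in $\xi'$---but the one place where you acknowledge an ``obstacle'' (controlling the oscillatory slice-area corrections) is not a detail; it is the whole point of the proof, and your setup makes it unfixable. You place $\Omega$ in $\mathbb{R}^{d-1}\times[0,\delta]$ with $\delta=\delta_{\mathbf v}(\Omega)$ and build $\psi_n$ on $[0,\delta]$. Then the corrections $\int_\Omega\cos(2\pi n\,v\cdot x/\delta)$ are genuinely of the same order in $z$ as the boundary term you are trying to isolate, and there is no sign control on them. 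Asserting that they can be ``absorbed'' is unjustified, and in general it is false.

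The paper's key idea---which you are missing---is to take the interval of length $L=2\delta_{\mathbf v}(\Omega)$, so that $\Omega$ and its translate by $L/2$ both sit inside $[0,L]$. Averaging the AVP inequality over the original and the translated copy kills every odd-$n$ cosine; iterating with shifts $L/4,L/8,\dots$ kills the remaining modes one dyadic level at a time, and because for fixed $z$ only finitely many modes lie in the ``energy ball'' $\Phi_k$, a finite number of such averages removes the entire correction term $G(z)$ exactly. This is not a Young/H\"older refinement; the appendix refinements are used only for Theorem~1.1. The 1D inequality (Lemma~3.2) is proved directly via the sawtooth function $\psi(t)=t-[t]-\tfrac12$, with the lower bound $\sum(R^2-k^2)_+\ge\max\!\big(\tfrac{2R^3}{3}+\tfrac{R^2}{2}-\tfrac R6,\,R^2\big)$ and an explicit analysis of $\big(\tfrac14-\psi^2\big)\big(R-\tfrac{\psi}{3}\big)$.

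Finally, your constants do not actually come out of your own normalization. With $L=\delta$, the second and third terms would be $\tfrac12 L^{cl}_{1,d-1}|\Omega|\delta^{-1}z^{(d+1)/2}$ and $\tfrac1{24}(2\pi)^{2-d}B_d|\Omega|\delta^{-2}z^{d/2}$---these are exactly the constants in Corollary~1.3 for product domains, where no translation trick is needed. The $\tfrac14$ and $\tfrac1{96}$ in the general theorem arise precisely because the interval is doubled to $L=2\delta$: the price paid for the room needed to average away $G(z)$. As written, your proposal simultaneously claims the product-domain constants and the impossibility of the averaging that justifies them, which is inconsistent.
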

Both inequalities \eqref{Neumann-Riesz-mean-two-term-bound} and \eqref{Neumann-Riesz-mean-two-term-bound-bis} will follow from our proof.
Although the bound \eqref{Neumann-Riesz-mean-two-term-bound-bis} improves \eqref{Neumann-Riesz-mean-two-term-bound}, we work in most cases with
\eqref{Neumann-Riesz-mean-two-term-bound} since we are mainly interested in large $z$.  An exception is Corollary \ref{corrected Polya cor} below,
where we use the estimate \eqref{Neumann-Riesz-mean-two-term-bound-bis}.
We also remark that while the first term is sharp, the second term in Eq.\eqref{Neumann-Riesz-mean-two-term-bound} appears too small by a factor $1/2$.
Indeed, for the box $\displaystyle \Omega=[0,1]^{d-1}\times [0,\delta]$ the bound \eqref{Neumann-Riesz-mean-two-term-bound} differs from the the asymptotic formula
\eqref{count-fct-expansion} by a factor $1/2$, since with $\delta_{\bf v}(\Omega)=\delta$, comparing the second term of \eqref{Neumann-Riesz-mean-two-term-bound}
and the asymptotic expansion \eqref{R-1-expansion} we obtain
\begin{equation*}
  \frac{|\Omega|}{\delta_{\bf v}(\Omega)}=1,\quad |\partial\Omega|=2+(d-1)\,\delta,
\end{equation*}
in which $\delta$ can be chosen arbitrarily small. More generally, this argument applies to any domain of the form $\displaystyle \Omega=\Omega'\times [0,\delta]$ such that $\Omega'$ is bounded in $\mathbb{R}^{d-1}$ with finite boundary, since
\begin{equation*}
  \frac{|\Omega|}{\delta_{\bf v}(\Omega)}=|\Omega'|,\quad |\partial\Omega|=2|\Omega'|+|\partial\Omega'|\,\delta.
\end{equation*}
From our method of proof it will be seen that for these
kinds of domains the lower bound \eqref{Neumann-Riesz-mean-two-term-bound} can be improved to the
optimal lower bound consistent with the asymptotic formula \eqref{R-1-expansion}.
It is less clear whether the improvement can be obtained
in the absence of a product structure.

\begin{cor} \label{cor-sharp-thm2}
Let $d\geq 2$ and $\displaystyle \Omega=\Omega'\times [0,\delta]$ be a bounded domain.
Then for any unit vector $v\in \mathbb{R}^d$ and for all $\mu\geq 0$,
\begin{equation}\label{Neumann-Riesz-mean-two-term-bound-cart-prod}
\begin{split}
  \sum_{j=1}(\z-\mu_j)_{+}\geq & L_{1,d}^{cl}|\Omega|\z^{\frac d 2 + 1}+\frac1{2}\,L_{1,d-1}^{cl}\frac{|\Omega|}{\delta_{\bf v}(\Omega)}\,\z^{\frac d 2 + \frac 1 2}\\
&-\frac{1}{24}\,(2\pi)^{2-d}B_{d}\frac{|\Omega|}{\delta_{\bf v}(\Omega)^2}\,\z^{\frac d 2}.\\
\end{split}
\end{equation}
\end{cor}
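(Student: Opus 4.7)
The strategy is to exploit the product structure of $\Omega = \Omega' \times [0, \delta]$ via separation of variables, combined with the sharp Laptev bound \eqref{Laptev-Riesz-mean} applied to the $(d-1)$-dimensional cross-section $\Omega'$. I focus on the natural choice $\mathbf{v} = \mathbf{e}_d$, for which $\delta_{\mathbf{v}}(\Omega) = \delta$ and $|\Omega|/\delta_{\mathbf{v}}(\Omega) = |\Omega'|$. Separation of variables shows that the Neumann spectrum of $-\Delta$ on $\Omega$ is the set $\{\mu_j(\Omega') + (k\pi/\delta)^2 : j \geq 1,\, k \geq 0\}$, and therefore
$$R_1(z) \;=\; \sum_{k=0}^{\infty} R_1^{\Omega'}\bigl(z - (k\pi/\delta)^2\bigr),$$
where $R_1^{\Omega'}$ denotes the Neumann Riesz mean for $\Omega' \subset \mathbb{R}^{d-1}$.

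Applying Laptev's lower bound \eqref{Laptev-Riesz-mean} in dimension $d - 1$ to each summand yields
$$R_1(z) \;\geq\; L_{1,d-1}^{cl}\,|\Omega'|\, \sum_{k=0}^{\infty} f(k), \qquad f(t) := \bigl(z - (t\pi/\delta)^2\bigr)_{+}^{(d+1)/2}.$$
A direct beta-function calculation together with $|\Omega| = \delta|\Omega'|$ gives the identity $L_{1,d-1}^{cl}|\Omega'|\int_{0}^{\infty} f(t)\, dt = L_{1,d}^{cl}|\Omega|\, z^{d/2+1}$, so replacing the lattice sum by its integral already reproduces the leading Laptev term of \eqref{Neumann-Riesz-mean-two-term-bound-cart-prod}. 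What remains is to establish the refined lattice-sum estimate
$$\sum_{k=0}^{\infty} f(k) \;\geq\; \int_{0}^{\infty} f(t)\, dt \,+\, \tfrac{1}{2}\,z^{(d+1)/2} \,-\, E(z),$$
with $E(z)$ absorbable by the third term of \eqref{Neumann-Riesz-mean-two-term-bound-cart-prod}.

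To prove this, I would extend $f$ to an even function on $\mathbb{R}$, which is concave on an interval $(-t_\ast, t_\ast)$ with $t_\ast^2 = z\delta^2/(d\pi^2)$. On the concave region the midpoint rule gives $f(k) \geq \int_{k-1/2}^{k+1/2} f(t)\,dt$ for $k \geq 1$ with $[k-1/2, k+1/2] \subset (0, t_\ast)$, while the even extension gives $f(0) \geq 2\int_0^{1/2} f(t)\,dt$. Summing and telescoping produces $\sum_{k\geq 0} f(k) \geq \int_0^\infty f + \int_0^{1/2} f$; the Bernoulli inequality $(1-x)^{(d+1)/2} \geq 1 - \tfrac{d+1}{2}x$ (valid since $(d+1)/2 \geq 1$) applied inside $\int_0^{1/2} f$ then gives $\int_0^{1/2} f \geq \tfrac12 z^{(d+1)/2} - \tfrac{(d+1)\pi^2}{48\, \delta^2} z^{(d-1)/2}$. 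Multiplying through by $L_{1,d-1}^{cl}|\Omega'|$ converts the $\tfrac12 z^{(d+1)/2}$ contribution into exactly the second term of \eqref{Neumann-Riesz-mean-two-term-bound-cart-prod} (twice the coefficient in Theorem \ref{Riesz-mean-thm}), and a comparison of powers shows the residual error is dominated by the $\tfrac{1}{24}(2\pi)^{2-d}B_d\frac{|\Omega|}{\delta_{\mathbf{v}}(\Omega)^2}z^{d/2}$ allowance whenever the right-hand side of \eqref{Neumann-Riesz-mean-two-term-bound-cart-prod} is nontrivial (for small $z$ the corollary is immediate from $R_1(z) \geq 0$).

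The main obstacle is managing the transition near $t \approx t_\ast$, where $f$ ceases to be concave and the midpoint-rule inequality fails on the convex tail; one must argue that the loss there does not spoil the factor $\tfrac12$. A clean alternative is Poisson summation applied to the even extension $F$ of $f$, giving
$$\sum_{k\geq 0} f(k) \;=\; \int_0^\infty f(t)\, dt \,+\, \tfrac12 f(0) \,+\, \tfrac12 \sum_{n \neq 0} \widehat{F}(2\pi n),$$
reducing the task to controlling the nonzero Fourier coefficients $\widehat{F}(2\pi n)$, which decay like $|n|^{-(d+3)/2}$ by stationary phase and are comfortably absorbed by the $z^{d/2}$ allowance.
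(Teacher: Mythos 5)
Your separation-of-variables strategy is a genuinely different route from the paper's. The paper's own proof of Corollary \ref{cor-sharp-thm2} is essentially a one-line observation inside the proof of Theorem \ref{Riesz-mean-thm}: with ${\bf v}$ the last coordinate direction and $L=\delta$, the cosine integral $\int_{\Omega}\cos\!\left(\frac{2\pi n\,{\bf v}\cdot{\bf x}}{L}\right)dx = |\Omega'|\int_0^{\delta}\cos\!\left(\frac{2\pi n x_d}{\delta}\right)dx_d$ already vanishes for all $n\geq 1$, so $G(z)\equiv 0$ without any translation averaging, the constraint $L=2\delta_{\bf v}$ is relaxed to $L=\delta$ in \eqref{AVP-1d-Neumann-testfct-av-final bound}, and the improved coefficients follow immediately. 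Your decomposition $R_1(z)=\sum_{k\geq 0}R_1^{\Omega'}\!\bigl(z-(k\pi/\delta)^2\bigr)$, followed by Laptev's bound \eqref{Laptev-Riesz-mean} on $\Omega'$, is a clean alternative, and I verified that if the resulting one-dimensional sum $\sum_{k\geq 0}(R^2-k^2)_+^{(d+1)/2}$ is estimated via the paper's own inequality \eqref{beta-Riesz-mean-lower-bound} with $\beta=\frac{d-1}{2}$, all three coefficients in \eqref{Neumann-Riesz-mean-two-term-bound-cart-prod} come out exactly right. The conceptual advantage of your route is that it makes the ``Laptev in $d-1$ dimensions plus a $1$D correction'' structure explicit; the paper's route is shorter because it inherits the heavy lifting already done for Theorem \ref{Riesz-mean-thm}.

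The weak link in your write-up, as you acknowledge, is the lattice-sum step. Your midpoint-rule argument establishes $\sum_{k\geq 0}f(k)\geq\int_0^\infty f+\int_0^{1/2}f$ only where $f$ is concave; on $(t_*,R)$ the map $t\mapsto (R^2-t^2)^{(d+1)/2}$ is convex (as it is for any exponent $>1$), the midpoint inequality reverses, and you would need a quantitative bound on the loss near the endpoint, not just the assertion that it ``does not spoil the factor $\tfrac12$.'' The Poisson-summation fallback has the same defect: the statement $\widehat{F}(2\pi n)=O\bigl(|n|^{-(d+3)/2}\bigr)$ hides a $z$- and $\delta$-dependent constant (the Fourier transform here is a scaled Bessel function $J_{(d+2)/2}$), and to compare the tail against the allowed $-\frac{1}{24}(2\pi)^{2-d}B_d\frac{|\Omega|}{\delta^2}z^{d/2}$ term you must track that dependence explicitly. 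Both gaps are closable, but the shortest fix is to drop the midpoint/Poisson heuristics entirely and cite Lemma \ref{1d-Neumann-Laplacian_Riesz-mean-lemma}, inequality \eqref{beta-Riesz-mean-lower-bound}, which the paper proves by a sawtooth-function identity combined with an integral representation in $\beta$. Also note your Bernoulli estimate produces an error of order $z^{(d-1)/2}$, a lower power than the $z^{d/2}$ in the stated corollary, so you are implicitly proving a slightly different (and for large $z$ slightly stronger) third term; that is fine, but the comparison should be stated rather than left as ``a comparison of powers shows.''
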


Note that by means of the integral transform
\begin{equation*}
  \int_{0}^{\infty}(z-\lambda-t)_{+}t^{\gamma-2}\,dt,\quad,\gamma>1,
\end{equation*}
Eq. \eqref{Neumann-Riesz-mean-two-term-bound} implies further bounds for higher Riesz means, {\em viz}.,
\begin{equation}\label{Neumann-Riesz-mean-two-term-bound-gamma}
\begin{split}
  \sum_{j=1}(z-\mu_j)_{+}^{\gamma}\geq & L_{\gamma,d}^{cl}|\Omega|z^{\frac{d}{2}+\gamma}+L_{\gamma,d-1}^{cl}\frac{|\Omega|}{4\delta_{\bf v}(\Omega)}\,z^{\frac{d}{2}+\gamma-\frac 1 2}\\
&-\frac{\pi}{96}\,L_{\gamma,d-2}^{cl}\frac{|\Omega|}{\delta_{\bf v}(\Omega)^{2}}\,z^{\frac{d}{2}+\gamma-1},\\
\end{split}
\end{equation}
for any $\gamma \ge 1$,
as well as a strengthened version by means of eq.\eqref{Neumann-Riesz-mean-two-term-bound-bis}.
This moreover implies that P\'olya's conjecture \eqref{Neumann-Polya} can be proved with an improvement for domains in product form:

\begin{cor} \label{corrected Polya cor}

Suppose that $\Omega = \Omega_1 \times \Omega_2$, where P\'olya's conjecture \eqref{Neumann-Polya} holds for $\Omega_1$ and
$\Omega_2$ is a domain of finite measure.  Then
\begin{equation}\label{corrected Polya}
  \mathcal{N}(z)\geq 1 + |\Omega|L_{0,d}^{cl} z^{\frac d 2} + |\Omega|\bigg(\frac{L_{0,d+1}^{cl}}{\sqrt{4 \pi} \cdot 4 \delta_{\bf v}(\Omega_2)} z^{\frac d 2 - \frac 1 2}
         - \frac{L_{0,d+2}^{cl}}{384 \delta_{\bf v}(\Omega_2)^2} z^{\frac d 2 - 1}\bigg)_{+}.
\end{equation}
\end{cor}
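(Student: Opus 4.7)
The plan is to combine three ingredients: the tensor-product decomposition of the Neumann spectrum of $\Omega = \Omega_1 \times \Omega_2$, the assumed P\'olya bound on $\Omega_1$, and a higher-$\gamma$ version of the Riesz-mean inequality of Theorem~\ref{Riesz-mean-thm} applied to the factor $\Omega_2$.

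First, since the Neumann eigenvalues of $\Omega$ are the sums $\mu_j^{(1)} + \mu_k^{(2)}$, the counting function factorises as
\[
\mathcal{N}_\Omega(z) = \sum_{k\ge 1} \mathcal{N}_{\Omega_1}(z - \mu_k^{(2)}),
\]
with only terms $\mu_k^{(2)} < z$ contributing. Applying P\'olya for $\Omega_1$ in the form $\mathcal{N}_{\Omega_1}(w) \ge |\Omega_1| L_{0,d_1}^{cl} w_+^{d_1/2}$, together with the trivial $\mathcal{N}_\Omega(z) \ge 1$ coming from the ground-state pair $(\mu_1^{(1)},\mu_1^{(2)}) = (0,0)$, yields
\[
\mathcal{N}_\Omega(z) \ge 1 + |\Omega_1| L_{0,d_1}^{cl}\, R_{d_1/2}^{(2)}(z),
\]
where $R_{d_1/2}^{(2)}(z) = \sum_k (z-\mu_k^{(2)})_+^{d_1/2}$ is a Riesz mean for $\Omega_2$ (the leading ``$1$'' must be peeled off carefully to avoid double-counting the ground state).

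Next, I would apply \eqref{Neumann-Riesz-mean-two-term-bound-gamma}, or its strengthened positive-part form inherited from \eqref{Neumann-Riesz-mean-two-term-bound-bis}, at exponent $\gamma = d_1/2$ on $\Omega_2$. This produces three summands with powers $z^{d/2}$, $z^{(d-1)/2}$ and $z^{d/2-1}$ and coefficients involving $L_{d_1/2,d_2}^{cl}$, $L_{d_1/2,d_2-1}^{cl}$ and $L_{d_1/2,d_2-2}^{cl}$. Multiplying by $|\Omega_1| L_{0,d_1}^{cl}$ and invoking the Beta-function identities $L_{0,d_1}^{cl}\, L_{d_1/2,d_2-m}^{cl} = L_{0,d-m}^{cl}$ for $m=0,1,2$ collapses the coefficients and, after the duplication-formula rewriting of $L_{0,d-m}^{cl}$ in terms of $L_{0,d+1}^{cl}$ and $L_{0,d+2}^{cl}$, yields the expression appearing in \eqref{corrected Polya}.

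The principal technical obstacle is bookkeeping: one must track the chain of classical-constant identities carefully and confirm that the two subleading corrections collapse into the single positive-part bracket in \eqref{corrected Polya}, rather than two separate ones, so that the inequality remains valid for every $z\ge 0$. A secondary obstacle is the boundary regime $d_1 = 1$ (where $\gamma = 1/2$ falls outside the range of \eqref{Neumann-Riesz-mean-two-term-bound-gamma}) and the regime $d_2 \le 1$ (where Theorem~\ref{Riesz-mean-thm} does not apply to $\Omega_2$ directly); in those cases an explicit Euler--Maclaurin estimate on the one-dimensional factor supplies the required two-term lower bound for the relevant Riesz mean.
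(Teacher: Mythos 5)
Your proposal follows essentially the same route as the paper: tensor-decompose the Neumann spectrum of $\Omega_1\times\Omega_2$, bound $\mathcal{N}_{\Omega_1}$ via the assumed P\'olya inequality to produce the Riesz mean $\sum_{j_2}(z-\mu_{j_2})_+^{d_1/2}$, apply \eqref{Neumann-Riesz-mean-two-term-bound-gamma} with $\gamma = d_1/2$ to the factor $\Omega_2$, collapse the constants via $L_{0,d_1}^{cl}L_{d_1/2,d_2-m}^{cl} = L_{0,d-m}^{cl}$, and finally obtain the positive-part bracket by taking the maximum with the trivial bound $\mathcal{N}(z)\ge 1+|\Omega|L_{0,d}^{cl}z^{d/2}$. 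Your remark about the edge cases $d_1=1$ (so $\gamma<1$) and $d_2\le 1$ (so Theorem~\ref{Riesz-mean-thm} does not apply to $\Omega_2$) is legitimate and is in fact sidestepped in the paper by the assumption $d_1\ge 2$ made at the start of its proof (with $d_2\ge 2$ implicit), so your care there is a modest refinement rather than a divergence.
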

\noindent
This implies P\'olya's conjecture for $\Omega$, when only the first two terms in this expression are kept.
\smallskip

The proof of
the main Theorem \ref{Riesz-mean-thm} is based on an averaged variational principle introduced by the authors \cite{HaSt14}, which was later
used in \cite{EHIS}
to extend and simplify Kr\"{o}ger's results for certain operators on manifolds.
The averaged variational principle uses
only basic properties of quadratic forms and an averaging over an orthormal basis or, more generally,
a frame.
Quoting from the formulation in \cite{EHIS}:

\begin{lem}\label{AVP}
Consider a self-adjoint operator $H$ on a Hilbert space $\mathcal{H}$,
the spectrum of which is discrete at least in its lower portion, so that
$- \infty < \mu_0 \le \mu_1 \le \dots$.
The corresponding orthonormalized eigenvectors
are denoted $\{\mathbf{\psi}^{(\ell)}\}$.  The
closed quadratic form corresponding to $H$
is denoted $Q(\varphi, \varphi)$ for vectors $\varphi$ in
the quadratic-form domain $\mathcal{Q}(H) \subset \mathcal{H}$.
Let $f_\zeta \in \mathcal{Q}(H)$ be
a family of
vectors indexed by
a variable $\zeta$ ranging over
a measure space $(\mathfrak{M},\Sigma,\sigma)$.
Suppose that $\mathfrak{M}_0$ is a
subset of $\mathfrak{M}$.  Then for any  $z \in \mathbb{R}$,
\begin{equation}\label{RieszVersion}
	\sum_{j}{\left(z - \mu_j\right)_{+} \int_{\mathfrak{M}}\left|\langle\mathbf{\psi}^{(j)}, f_\zeta\rangle\right|^2\,d \sigma}
       \geq
	\int_{\mathfrak{M}_0}{\left(z\| f_\zeta\|^2 - Q(f_\zeta,f_\zeta) \right) d \sigma},
\end{equation}
provided that the integrals converge.
\end{lem}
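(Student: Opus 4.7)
The plan is to expand each $f_\zeta$ in the orthonormal basis of eigenvectors $\{\psi^{(\ell)}\}$, apply Parseval together with the spectral representation of the closed form $Q$, integrate the resulting identity over $\mathfrak{M}_0$, and then pass from $\mathfrak{M}_0$ to $\mathfrak{M}$ at the cost of replacing each $(z-\mu_\ell)$ by its positive part.

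Setting $c_\ell(\zeta):=\langle\psi^{(\ell)},f_\zeta\rangle$, Parseval gives $\|f_\zeta\|^2=\sum_\ell|c_\ell(\zeta)|^2$, and since $f_\zeta\in\mathcal{Q}(H)$ the spectral representation of the closed quadratic form gives $Q(f_\zeta,f_\zeta)=\sum_\ell\mu_\ell|c_\ell(\zeta)|^2$. Subtracting,
\[
z\|f_\zeta\|^2-Q(f_\zeta,f_\zeta)=\sum_\ell(z-\mu_\ell)\,|c_\ell(\zeta)|^2 .
\]
Integrating this identity over $\mathfrak{M}_0$ and invoking Fubini--Tonelli (applied to the positive and negative parts separately, both of which are $\sigma$-integrable by the stated convergence hypothesis) yields
\[
\int_{\mathfrak{M}_0}\bigl(z\|f_\zeta\|^2-Q(f_\zeta,f_\zeta)\bigr)\,d\sigma=\sum_\ell(z-\mu_\ell)\int_{\mathfrak{M}_0}|c_\ell(\zeta)|^2\,d\sigma .
\]

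The last step is a termwise comparison. If $\mu_\ell\le z$, then $(z-\mu_\ell)=(z-\mu_\ell)_{+}\ge 0$, and enlarging the domain of integration from $\mathfrak{M}_0\subseteq\mathfrak{M}$ to $\mathfrak{M}$ can only increase the nonnegative integral, so the $\ell$-th term is bounded above by $(z-\mu_\ell)_{+}\int_{\mathfrak{M}}|c_\ell(\zeta)|^2\,d\sigma$. If $\mu_\ell>z$, then the $\ell$-th term on the left is $\le 0$ while $(z-\mu_\ell)_{+}=0$, so the desired inequality holds trivially. Summing over $\ell$ produces \eqref{RieszVersion}.

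The only potentially delicate point is the tacit use of completeness of $\{\psi^{(\ell)}\}$ in $\mathcal{H}$. For the Neumann (and Dirichlet) Laplacian on a bounded domain this is automatic and the proof is complete; in the general setting, where $H$ has essential spectrum above the discrete eigenvalues, one first decomposes $f_\zeta$ with respect to the projection onto the closed span of the discrete eigenvectors and observes that the form value on the orthogonal complement is at least the bottom of the essential spectrum, so that for $z$ below that threshold the continuous-spectrum contribution to $z\|f_\zeta\|^2-Q(f_\zeta,f_\zeta)$ is nonpositive and can be safely dropped. This bookkeeping is the principal, though minor, obstacle; the core argument is otherwise a direct consequence of Parseval and the obvious inequality $(z-\mu_\ell)\int_{\mathfrak{M}_0}\le (z-\mu_\ell)_+\int_\mathfrak{M}$.
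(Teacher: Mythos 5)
Your proof is correct, and it is essentially the standard argument behind the averaged variational principle; the paper itself does not reproduce a proof but quotes the lemma from \cite{EHIS}, where the same Parseval-plus-monotonicity reasoning appears. The only organizational difference is that you integrate the pointwise Parseval identity over $\mathfrak{M}_0$ first and then compare term-by-term in $\ell$, whereas the usual presentation establishes the pointwise inequality $\sum_j(z-\mu_j)_+|\langle\psi^{(j)},f_\zeta\rangle|^2\ge z\|f_\zeta\|^2-Q(f_\zeta,f_\zeta)$ for each fixed $\zeta$ (using $(z-\mu_j)_+\ge z-\mu_j$ and nonnegativity of any continuous-spectrum contribution to $Q-z$ below $\inf\sigma_{\mathrm{ess}}$) and only then integrates, enlarging $\mathfrak{M}_0$ to $\mathfrak{M}$ on the nonnegative side. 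Both routes are equivalent. Your closing remark about the essential spectrum is the right caveat: when the $\psi^{(\ell)}$ are not complete, the Parseval identity becomes an inequality in the needed direction only for $z\le\inf\sigma_{\mathrm{ess}}(H)$, so the literal ``for any $z\in\mathbb{R}$'' in the statement implicitly presupposes either purely discrete spectrum (as for the Neumann Laplacian on a bounded domain) or $z$ below the essential spectrum.
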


\section{Proofs of the main results}
\subsection{Refinement of Kr\"{o}ger's inequality: Theorem 1.1}\label{refinement}
{The quadratic-form domain of the Neumann Laplacian $-\Delta^N$ on a Euclidean domain $\Omega$
is the restriction to $\Omega$ of functions in  the Sobolev space $H_0^1(\mathbb{R}^d)$
\cite{EdEv}
(which is normally but not always the same as
$H^1(\Omega)$)},
and the quadratic form corresponding to $-\Delta^N$ is
\begin{equation}\label{Neumann-quadratic-form}
  Q(f,f)=\int_{\Omega}|\nabla f|^2\,dx.
\end{equation}
The trial functions $\displaystyle f({\bf x})=e^{i{\bf p}\cdot{\bf x}}$ are admissible, so choosing them as in \cite{Kro}
leads after a calculation
to the following bound for the eigenvalues of the Neumann Laplacian.
(The set $\mathfrak{M}$ is chosen as $\{{\bf p} \in \mathbb{R}^d\}$ with Lebesgue measure, and $\mathfrak{M_0}$ is the ball of radius $R$.  See
\cite{Kro,EHIS} for details of the calculation.)
\begin{equation}\label{Neumann-Laplacian-ev-bound}
  \mu_{k+1}R^d-\frac{d}{d+2}R^{d+2}\leq  m_k^{d/2}\left(\mu_{k+1}-\frac{1}{k}\sum_{i=1}^{k}\mu_i\right)
\end{equation}
for all $R>0$, cf.\ \eqref{Kroeger-sum-abbreviations}.
Putting $R^d=m_k^{d/2}x^{d/2}$, we get the bound
\begin{equation*}
  \frac{d+2}{d}\frac{1}{k}\sum_{i=1}^{k}\mu_i\leq m_k\left(\frac{d+2}{d}\frac{\mu_{k+1}}{m_k}-\frac{d+2}{d}\frac{\mu_{k+1}}{m_k}\,x^{\frac{d}{2}}+x^{\frac{d+2}{2}}\right).
\end{equation*}
We choose $\displaystyle x=x_k=\frac{\mu_{k+1}}{m_k}$. This yields
\begin{equation}\label{Neumann-Laplacian-ev-bound-normalized1}
  \frac{d+2}{d}\frac{1}{k}\sum_{i=1}^{k}\mu_i-m_k\leq m_k\frac{2}{d}\,\left(\frac{d+2}{2}\,x_k-\frac{d}{2}-x_k^{\frac{d+2}{2}}\right).
\end{equation}
We may assume that $d\geq2$, since when $d=1$ all eigenvalues are explicitly known. Then
$\displaystyle p=\frac{d}{2}\geq 1$,
and, therefore, the function $g_p(x)$ defined in \eqref{Young-fct-refinement2}
is $\leq 0$.
Hence we obtain:
\begin{equation}\label{Neumann-Laplacian-ev-bound-normalized2}
  \frac{d+2}{d}\frac{1}{k}\sum_{i=1}^{k}\mu_i-m_k\leq -m_k\,(x_k-1)^2,
\end{equation}
which strengthens Kr\"oger's estimate
\begin{equation*}
  \frac{d+2}{d}\frac{1}{k}\sum_{i=1}^{k}\mu_i\leq m_k=C_d\frac{k^{2/d}}{|\Omega|^{2/d}}
\end{equation*}
and yields the bound on $\mu_{k+1}$
%\begin{equation}
%  m_k(1-\sqrt{1-S_k})\leq \mu_{k+1} \leq m_k(1+\sqrt{1-S_k}),
%\end{equation}
claimed in \eqref{Neumann-Laplacian-ev-bound-normalized3}.

\subsection{Two-term spectral bounds: Proof of Theorem 1.2}
\begin{proof}
Let ${\bf v}\in \mathbb{R}^d$
be a unit vector. After a translation we may suppose that $\Omega \subset \mathbb{R}^d$ is a bounded domain such that $\Omega \subset \{{\bf x}\in \mathbb{R}^d: 0\leq {\bf v}\cdot{\bf x}\leq L\}$, that is, in the ${\bf v}$ direction all ${\bf x}\in\Omega$ are contained in an interval of length $L$. We shall choose $L$ later as $L=2\delta_{\bf v}(\Omega)$. Fixing ${\bf v}$, we may choose a coordinate system such that ${\bf v}$ is a standard unit vector of the canonical basis of $\mathbb{R}^d$.
We apply the averaged variational principle \ref{AVP}
with test functions of the form
\begin{equation}\label{Neumann-band-testfct}
  f(x)=(2\pi)^{-\,\frac{d-1}{2}}e^{i{\bf p}_{\bot}\cdot{\bf x}}\phi_n({\bf v}\cdot{\bf x}),
\end{equation}
where ${\bf p}_{\bot}={\bf p}-({\bf p}\cdot {\bf v}){\bf v}$ and $\phi_n$ is an eigenfunction of the Neumann Laplacian on an interval of length $L$, that is,
\begin{equation}\label{+d-Neumann-problem}
  -\phi_n''(y)=\kappa_n\phi_n(y)\quad \text{on}\,]0,L[\, \text{and}\,\phi_n'(0)=\phi_n'(L)=0.
\end{equation}
Recall that the eigenvalues $\kappa_n$ are given by $\displaystyle \kappa_n=\frac{(\pi n)^2}{L^2}$, $n\in \mathbb{N}$ and the (normalized) eigenfunctions are given by $\displaystyle \phi_0(y)=L^{-1/2}$ and $\displaystyle \phi_n(y)=\sqrt{\frac{2}{L}}\,\cos\left(\frac{\pi n y}{L}\right)$,
where $n$ ranges over the positive integers.
With these test functions, the variational principle implies that
\begin{equation}\label{AVP-1d-Neumann-testfct}
\begin{split}
  \sum_{j=1}^k(\z-\mu_j)|\langle f, u_j\rangle|^2 \geq  &(2\pi)^{1-d}(\z-|{\bf p}_{\bot}|^2)\int_{\Omega}\phi_n({\bf v}\cdot{\bf x})^2\\
  &-(2\pi)^{1-d}\int_{\Omega}\phi'_n({\bf v}\cdot{\bf x})^2\\
  \end{split}
\end{equation}
{for any $z \in [\mu_k, \mu_{k+1}]$.}
When $n>0$ we apply the trigonometric identities $\displaystyle\cos^2t=\frac{1+\cos 2t}{2}$ and $\displaystyle\sin^2t=\frac{1-\cos 2t}{2}$ to $\phi_n({\bf v}\cdot{\bf x})^2$ and $\phi_n'({\bf v}\cdot{\bf x})^2$, respectively. Then for all $n\geq 0$, \eqref{AVP-1d-Neumann-testfct} becomes
\begin{equation}\label{AVP-1d-Neumann-testfct-bis}
\begin{split}
  &\sum_{j=1}^k(\z-\mu_j)|\langle f, u_j\rangle|^2 \geq  \,(2\pi)^{1-d}L^{-1}|\Omega|\,(\z-|{\bf p}_{\bot}|^2- \frac{(\pi n)^2}{L^2})\\
  &+(2\pi)^{1-d}L^{-1}\left(\z-|{\bf p}_{\bot}|^2+ \frac{(\pi n)^2}{L^2}\right)(1-\delta_{0,n})\int_{\Omega}\cos\left(\frac{2\pi n {\bf v}\cdot{\bf x}}{L}\right),\\
  \end{split}
\end{equation}
where $\delta_{0,n}$ denotes the Kronecker delta.
On the right side we integrate over the set
$\displaystyle \Phi_{k}=\{({\bf p}_{\bot},n)\in \mathbb{R}^{d-1}\times \mathbb{N}: |{\bf p}_{\bot}|^2+\frac{\pi^2 n^2}{L^2}\leq \z \}$ while on the left
side over the larger set
$\mathbb{R}^{d-1}\times \mathbb{N}$, using Parseval's identity.
We shall prove in Lemma \ref{1d-Neumann-Laplacian_Riesz-mean-lemma} below that for all $R>0$,
\begin{equation}\label{1d-Neumann-Laplacian_Riesz-mean}
  \sum_{k\geq 0} (R^2-k^2)_{+}\geq \max\left(\frac{2R^3}{3}+\frac{R^2}{2}-\frac{R}{6},R^2\right).
\end{equation}
By  applying the lower bound
\eqref{1d-Neumann-Laplacian_Riesz-mean} to the sum over $n$ and then integrating over ${\bf p}_{\bot}$
we obtain an explicit lower bound for
$\displaystyle\underset{\Phi_k}{\int\sum}\,\left(\z-|{\bf p}_{\bot}|^2- \frac{(\pi n)^2}{L^2}\right)$.
Since $\int\max(f,g)\geq \max(\int f,\int g)$, this yields
\begin{equation}\label{AVP-1d-Neumann-testfct-av}
\begin{split}
  \sum_{j=1}^k(\z-\mu_j)\geq  &\frac{2}{d+2}\,(2\pi)^{-d}B_d|\Omega|\,\z^{\frac{d}{2}+1}\\
  &+\frac{1}{d+1}\,(2\pi)^{1-d}B_{d-1}|\Omega|L^{-1}\,\z^{\frac{d+1}{2}}\\
 &-\frac{1}{24}\,(2\pi)^{2-d}B_{d}|\Omega|L^{-2}\z^{\frac{d}{2}}+G(\z),
  \end{split}
\end{equation}
where
\begin{equation*}
  G(\z):=\underset{\Phi_k}{\int\sum}(2\pi)^{1-d}(\z-|{\bf p}_{\bot}|^2+ \frac{(\pi n)^2}{L^2})(1-\delta_{0,n})\int_{\Omega}\cos\left(\frac{2\pi n {\bf v}\cdot{\bf x}}{L}\right).
\end{equation*}
It remains to control $G(\z)$,
{which could in principle be positive or negative.  In fact, by averaging
\eqref{AVP-1d-Neumann-testfct-av} in a certain way we shall show that $G$ can be dropped
altogether.  To this
end we choose $L$}
large enough that $\Omega$ is also contained in $\{{\bf x}\in \mathbb{R}^d: 0\leq {\bf v}\cdot {\bf x}\leq L\}$
when translated by $L/2$. This means nothing else than assuming that
$\Omega \subset \{{\bf x}\in \mathbb{R}^d: 0\leq {\bf v}\cdot {\bf x} \leq L/2\}$. In the corresponding Neumann eigenfunctions we have to replace ${\bf v}\cdot{\bf x}$ by ${\bf v}\cdot{\bf x}+L/2$. We may apply the averaged variational principle on both sets (the eigenvalues $\displaystyle \frac{(\pi n)^2}{L^2}$, $n\in \mathbb{N}$ remain unchanged). Since
\begin{equation*}
  \frac1{2}\bigg(\cos\left(\frac{2\pi n {\bf v}\cdot{\bf x}}{L}\right)+\cos\left(\frac{2\pi n ({\bf v}\cdot{\bf x}+L/2)}{L}\right)\bigg)= \begin{cases}
    \cos\left(\frac{2\pi n {\bf v}\cdot{\bf x}}{L}\right) & \text{if $n$ is even}, \\
    0 & \text{if $n$ is odd},
  \end{cases}
\end{equation*}
all odd $n$
{may be dropped from $G(\z)$, leaving only cosine functions of the form $\displaystyle cos\left(\frac{4\pi n {\bf v}\cdot{\bf x}}{L}\right)$ with $n$ a positive integer.
We apply the same averaging procedure with a translation by $L/4$.
Since
\begin{equation*}
  \frac1{2}\bigg(\cos\left(\frac{4\pi n {\bf v}\cdot{\bf x}}{L}\right)+\cos\left(\frac{2\pi n ({\bf v}\cdot{\bf x}+L/4)}{L}\right)\bigg)= \begin{cases}
    \cos\left(\frac{4\pi n {\bf v}\cdot{\bf x}}{L}\right) & \text{if $n$ is even}, \\
    0 & \text{if $n$ is odd},
  \end{cases}
\end{equation*}
again the terms containing odd integers may be dropped.
Since $G(\z)$ contains only a finite number of contributions, after a finite sequence of
averages with shifts $L/2^n$, every contribution will be eliminated.}
Hence
\begin{equation}\label{AVP-1d-Neumann-testfct-av-final bound}
\begin{split}
  \sum_{j=1}^k(z-\mu_j)&\geq  \frac{2}{d+2}\,(2\pi)^{-d}B_d|\Omega|\,z^{\frac{d}{2}+1}\\
  &\quad +\frac{1}{d+1}\,(2\pi)^{1-d}B_{d-1}|\Omega|L^{-1}\,z^{\frac{d+1}{2}}\\
 &\quad -\frac{1}{24}\,(2\pi)^{2-d}B_{d}|\Omega|L^{-2}z^{\frac{d}{2}}.
  \end{split}
\end{equation}
We may now choose $L=2\delta_{\bf v}(\Omega)$, which yields the statement of the theorem.
\end{proof}
\paragraph{} To prove Corollary \ref{cor-sharp-thm2} we note that when
$\displaystyle \Omega=\Omega'\times [0,\delta]$ we
may choose ${\bf v}={\bf e}_n$. As a consequence
\begin{equation*}
  \int_{\Omega}\phi_n({\bf v}\cdot{\bf x})^2=|\Omega'|,\quad \int_{\Omega}\phi'_n({\bf v}\cdot{\bf x})^2=\frac{(\pi n)^2}{L^2}\,|\Omega'|,
\end{equation*}
and no translations are needed. Therefore we may choose $L=\delta$ which yields the bound \eqref{Neumann-Riesz-mean-two-term-bound-cart-prod}.

\noindent
From the bound \eqref{1d-Neumann-Laplacian_Riesz-mean} it is straightforward
to derive the simpler expression
\begin{equation}\label{1d-Neumann-Laplacian_Riesz-mean-weaker-lb}
  \sum_{k\geq 0} (R^2-k^2)_{+}\geq \frac{2R^3}{3}+\frac{R^2}{3},
\end{equation}
containing only two terms. This yields the following spectral bound.
\begin{cor}\label{2termcor}
Let $d\geq 2$. Then for any unit vector $v\in \mathbb{R}^d$ and for all $\mu\geq 0$
\begin{equation}\label{Neumann-Riesz-mean-weaker-two-term-bound}
  \sum_{j=1}^k(z-\mu_j)_{+}\geq  L_{1,d}^{cl}|\Omega|z^{\frac d 2 +1}+
 {L_{1,d-1}^{cl}\frac{|\Omega|}{6\delta_{\bf v}(\Omega)}}\,z^{\frac d 2 +\frac 1 2}.
\end{equation}
\end{cor}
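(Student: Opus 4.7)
The plan is to follow the proof of Theorem \ref{Riesz-mean-thm} almost verbatim, the only change being the use of the two-term one-dimensional bound \eqref{1d-Neumann-Laplacian_Riesz-mean-weaker-lb} in place of \eqref{1d-Neumann-Laplacian_Riesz-mean}. First I would verify that \eqref{1d-Neumann-Laplacian_Riesz-mean-weaker-lb} is a direct consequence of \eqref{1d-Neumann-Laplacian_Riesz-mean}: a brief case analysis shows that $\tfrac{2R^3}{3}+\tfrac{R^2}{3}$ is bounded above by $R^2$ when $R\leq 1$ (equivalent to $2R^3\leq 2R^2$) and by $\tfrac{2R^3}{3}+\tfrac{R^2}{2}-\tfrac{R}{6}$ when $R\geq 1$ (equivalent to $R^2\geq R$), so in either regime it is dominated by the stronger bound.

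Next I apply the averaged variational principle exactly as in the proof of Theorem \ref{Riesz-mean-thm}, with the test functions \eqref{Neumann-band-testfct} and the one-dimensional Neumann eigenfunctions on an interval of length $L$. Integrating the right-hand side of \eqref{AVP-1d-Neumann-testfct-bis} over $\Phi_k$, I apply \eqref{1d-Neumann-Laplacian_Riesz-mean-weaker-lb} (with $R=L\sqrt{z-|{\bf p}_{\perp}|^2}/\pi$) to the sum over $n$. This produces exactly two integrated contributions: the leading Weyl term $L_{1,d}^{cl}|\Omega|z^{d/2+1}$ arising from $\tfrac{2R^3}{3}$, and a secondary term $\tfrac{2}{3(d+1)}(2\pi)^{1-d}B_{d-1}|\Omega|L^{-1}z^{(d+1)/2}$ arising from $\tfrac{R^2}{3}$, which is precisely $\tfrac{2}{3}$ of the second term of \eqref{AVP-1d-Neumann-testfct-av}. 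The oscillatory remainder $G(z)$ appears exactly as in the original proof.

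To dispose of $G(z)$ I use the same shift-averaging argument: averaging the estimate over translations by $L/2$, then $L/4$, and so on, kills off successive frequencies until all the finitely many cosine contributions vanish. Setting $L=2\delta_{\bf v}(\Omega)$ and invoking the elementary identity $\tfrac{(2\pi)^{1-d}B_{d-1}}{d+1}=\tfrac{1}{2}L_{1,d-1}^{cl}$ (a direct gamma-function computation) collapses the secondary coefficient to precisely $\tfrac{1}{6}L_{1,d-1}^{cl}|\Omega|/\delta_{\bf v}(\Omega)$, yielding \eqref{Neumann-Riesz-mean-weaker-two-term-bound}. The only novel ingredient beyond the proof of Theorem \ref{Riesz-mean-thm} is the one-variable comparison above; everything else is a direct transcription with trivially adjusted constants, and since the shift-averaging that eliminates $G(z)$ does not depend on which lower bound on $\sum_{k\geq 0}(R^2-k^2)_{+}$ one uses, there is no genuine new obstacle.
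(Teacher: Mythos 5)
Your proposal is correct and is exactly the route the paper intends: the paper itself states that \eqref{1d-Neumann-Laplacian_Riesz-mean-weaker-lb} follows from \eqref{1d-Neumann-Laplacian_Riesz-mean} by a straightforward comparison, and then applies the same machinery as in the proof of Theorem \ref{Riesz-mean-thm} with the three-term one-dimensional bound replaced by the two-term one. Your case check ($R\leq 1$ vs.\ $R\geq 1$), the identification of the new second coefficient as $\tfrac{2}{3}$ of the one in \eqref{AVP-1d-Neumann-testfct-av}, and the gamma-function identity $\tfrac{1}{d+1}(2\pi)^{1-d}B_{d-1}=\tfrac{1}{2}L_{1,d-1}^{cl}$ are all correct and combine with $L=2\delta_{\bf v}(\Omega)$ to give exactly the stated coefficient $\tfrac{1}{6}L_{1,d-1}^{cl}/\delta_{\bf v}(\Omega)$.
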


%which is defined for all $\gamma\geq 0$ and, in particular, $\displaystyle L_{0,d}^{cl}=C_d$.
%TO DISCUSS WHETHER TO PRESENT HERE OR NOT

The term containing the width
$\delta_{\bf v}$ can be estimated by geometric properties of the convex hull of
$\Omega$, since $\delta_{\bf v}(\Omega)$ coincides with
$\delta_{\bf v}({\rm hull}(\Omega))$.
For example, in 2 dimensions,
\begin{equation}\label{perim}
\int_{S^{1}}{\delta_{\bf v}} = 2 |\partial {\rm hull}(\Omega)|.
\end{equation}
With Corollary \ref{2termcor}, by choosing ${\bf v}$ so that $\delta_{\bf v}$ equals the
mean width $w$ of $ {\rm hull}(\Omega)$
(= the average of $\delta_{\bf v}$
uniformly over directions ${\bf v}$),
we obtain a correction involving the isoperimetric ratio of $\Omega$,
\begin{equation}\label{hullperim}
  \sum_{j=1}^k(\mu-\mu_j)_{+} \geq  L_{1,2}^{cl}|\Omega|\mu^{2}+L_{1,1}^{cl}\frac{\pi |\Omega|}{6|\partial {\rm hull}(\Omega)|}\,\mu^{3/2}.
\end{equation}
In arbitrary dimensions, if
$\delta_{\bf v}$ is chosen equal to $w$,
then, following Bourgain \cite{Bou},
the final term in \eqref{Neumann-Riesz-mean-weaker-two-term-bound}
%\item  $w \ge \left(\frac{|\Omega|}{\omega_d}\right)^{\frac 1 d}$  (Urysohn's inequality)
can be bounded from below in terms of the
{\em isotropic constant},
$$
L_{{\rm hull}(\Omega)|}^2 := \frac{\det(M_{{\rm hull}(\Omega)|})^{\frac{1}{d}}}{{\rm Vol({\rm hull}(\Omega)|)^{1+\frac{2}{d}}}},
$$
where the inertia matrix $M_{ij} = \int_{{\rm hull}(\Omega)|}{x_i x_j dx}$ has been minimized with respect to the choice of the origin.
Finding the optimal upper bound for the ratio $\frac{w}{L_\Omega}$ for convex $\Omega$
is an open problem in analysis.  In \cite{Mil}, Milman has, for example, proved an upper bound for $w$
in the form of a universal
constant times $\sqrt{d}\, \log(d)^2$.

It has been known since the work of Ball \cite{Bal} that under various
further assumptions convex bodies satisfy reverse isoperimetric inequalities,
with which Inequality \eqref{hullperim} can be connected to additional geometric properties of ${\rm hull}(\Omega)$.  See, e.g., \cite{PaZh}.

\bigskip

Finally, we prove Corollary \ref{corrected Polya cor}.  It was shown by Laptev \cite{Lap1997} that
if P\'olya's conjecture holds on a domain $\Omega_1$, then it holds on arbitrary Cartesian products
of the form $\Omega_1 \times \Omega_2$.  (The Dirichlet case was treated in \cite{Lap1997}.)  In fact, the same argument
allows improve bounds on the counting function, benefitting from the improved bounds for sums coming from
$\Omega_2$, as follows.

\begin{proof}

Suppose that $\Omega_1\subset \mathbb{R}^{d_1}$ $d_1\geq 2$ is a domain for which P\'olya's conjecture
\begin{equation*}
  \mathcal{N}(z)=\sum_{j}(z-\mu_{j})_{+}^{0}\geq 1+L_{0,d_1}^{cl}\,|\Omega_1|z^{d_1/2}
\end{equation*}
is valid.
Let $d_1+d_2=d$, $\Omega=\Omega_1\times\Omega_2$ with $\Omega_2\subset \mathbb{R}^{d_2}$ of finite measure. The Neumann eigenvalues $\mu_j$ of $\Omega$ are of the form $\mu_j=\mu_{j_1}+\mu_{j_2}$ where $\mu_{j_1},\mu_{j_2}$ are the Neumann eigenvalues of $\Omega_1,\Omega_2$, respectively.
Therefore,
\begin{equation*}
  \sum_{j}(z-\mu_{j})_{+}^{0}=\sum_{j_2}\sum_{j_1}(z-\mu_{j_2}-\mu_{j_1})_{+}^{0}\geq 1+L_{0,d_1}^{cl}\,|\Omega_1|\sum_{j_2}(z-\mu_{j_2})_{+}^{d_1/2}.
\end{equation*}
Since $d_1/2\geq 1$, using \eqref{Neumann-Riesz-mean-two-term-bound-gamma} and \eqref{L-gamma-class} we obtain
\begin{align*}
  \mathcal{N}(z) &\geq 1+ L_{0,d_1}^{cl}\,|\Omega_1|\,|\Omega_2|\bigg(L_{\frac{d_1}{2},d_2}^{cl} z^{\frac d 2} +
  \frac{L_{\frac{d_1}{2},d_2 -2}^{cl}}{4 \delta_{\bf v}(\Omega_2)} z^{\frac d 2 - \frac 1 2}\\
         &\quad\quad -\frac{\pi}{96}\,\frac{L_{\gamma,d-2}^{cl}}{\delta_{\bf v}(\Omega_2)^{2}}\,z^{\frac{d}{2}+\gamma-1}\bigg) \\
         &\geq 1 + |\Omega|\bigg(L_{0,d}^{cl} z^{\frac d 2} + \frac{L_{0,d+1}^{cl}}{\sqrt{4 \pi} \cdot 4 \delta_{\bf v}(\Omega_2)} z^{\frac d 2 - \frac 1 2}
         - \frac{L_{0,d+2}^{cl}}{384 \delta_{\bf v}(\Omega_2)^2} z^{\frac d 2 - 1}\bigg),
\end{align*}
as well as
\begin{equation*}
  \mathcal{N}(z)\geq 1 + |\Omega|L_{0,d}^{cl} z^{\frac d 2}.
\end{equation*}
Combining both estimates we prove the claim.

\end{proof}

\section{Riesz means of Laplacians on rectangles}
In this section we derive upper and lower bounds for Riesz means of Neumann and Dirichlet Laplacians, respectively,
on the rectangle $R:=[0,l_1]\times[0,l_2]$.
\begin{thm}\label{thm-ev-rectangle}
Let $\mu_i^R,\lambda_i^R$ denote the eigenvalues of the Neumann Laplacian and the Dirichlet Laplacian on $R=[0,l_1]\times[0,l_2]$. Suppose that $l_1\leq l_2$. Then the following estimates hold
\begin{equation}\label{R-1-rectangle-Neumann-bounds}
 \begin{split}
&\frac{3\pi}{128}\left(\frac{l_2}{l_1}+\frac{l_1}{l_2}+\frac{32}{3\pi}\right)\,\mu+\frac{3\pi}{64}\left(\frac{1}{l_1}+\frac{1}{l_2}\right)\,\mu^{1/2}
+\frac{3\pi^{3/2}2^{1/2}}{64 l_2l_1^{1/2}}\,\mu^{1/4}\\
&\geq \sum_{j=1}^k(\mu-\mu_j^R)_{+}-\frac{|R|}{8\pi}\,\mu^2-\frac{|\partial R|}{6\pi}\,\mu^{3/2}\\
&\geq -\frac{\pi}{24}\left(\frac{l_2}{l_1}+\frac{l_1}{l_2}-\frac{6}{\pi}\right)\,\mu-\frac{\pi}{12}\left(\frac{1}{l_1}+\frac{1}{l_2}\right)\,\mu^{1/2}-\frac{\pi^{3/2}2^{1/2}}{12 l_2l_1^{1/2}}\,\mu^{1/4},
\end{split}
\end{equation}
and
\begin{equation}\label{R-1-rectangle-Dirichlet-bounds}
 \begin{split}
&\frac{3\pi}{128}\left(\frac{l_2}{l_1}+\frac{l_1}{l_2}+\frac{32}{3\pi}\right)\,\lambda+\frac{\pi}{12}\left(\frac{1}{l_2}-\frac{9}{16l_1}\right)\,\lambda^{1/2}
+\frac{3\pi^{3/2}2^{1/2}}{64 l_2l_1^{1/2}}\,\lambda^{1/4}\\
&\geq \sum_{j=1}^k(\lambda-\lambda_j^R)_{+}-\frac{|R|}{8\pi}\,\lambda^2+\frac{|\partial R|}{6\pi}\,\lambda^{3/2}\\
&\geq -\frac{\pi}{24}\left(\frac{l_2}{l_1}+\frac{l_1}{l_2}-\frac{6}{\pi}\right)\,\lambda+\frac{\pi}{12}\left(\frac{1}{l_1}-\frac{9}{16l_2}\right)\,\lambda^{1/2}-\frac{\pi^{3/2}2^{1/2}}{12 l_2l_1^{1/2}}\,\lambda^{1/4}.
\end{split}
\end{equation}

\end{thm}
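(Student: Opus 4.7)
The strategy is to exploit the product structure of the spectrum on $R=[0,l_1]\times[0,l_2]$, where the Neumann eigenvalues are $\mu^R_{m,n}=\pi^2(m^2/l_1^2+n^2/l_2^2)$ with $(m,n)\in\mathbb{N}_0^2$ and the Dirichlet eigenvalues the same expression restricted to $(m,n)\in\mathbb{N}^2$. The plan is to sharpen the one-dimensional estimate \eqref{1d-Neumann-Laplacian_Riesz-mean} into an explicit two-sided identity, then apply it in the inner sum and analyse the outer sum by a careful Euler--Maclaurin expansion.

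I would first refine \eqref{1d-Neumann-Laplacian_Riesz-mean} to a two-sided estimate for $S(R):=\sum_{k\geq 0}(R^2-k^2)_{+}$. Writing $R=\lfloor R\rfloor+\alpha$ with $\alpha\in[0,1)$ and using $\sum_{k=0}^{N}k^2=N(N+1)(2N+1)/6$ produces the exact identity
\[
S(R)=\tfrac{2R^3}{3}+\tfrac{R^2}{2}-\tfrac{R}{6}+R\alpha(1-\alpha)+\tfrac{1}{6}\alpha(\alpha-1)(2\alpha-1).
\]
Since $R\alpha(1-\alpha)\in[0,R/4]$ and the last polynomial in $\alpha$ is bounded on $[0,1]$ by an explicit constant, this recovers the lower bound \eqref{1d-Neumann-Laplacian_Riesz-mean} and yields a matching upper bound $S(R)\le\tfrac{2R^3}{3}+\tfrac{R^2}{2}+\tfrac{R}{12}+c$. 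Using the factorisation of the spectrum,
\[
R_1^N(\mu):=\sum_{j}(\mu-\mu_j^R)_{+}=\sum_{m\geq 0}\frac{\pi^2}{l_2^2}\,S\!\Bigl(\frac{l_2}{\pi}\sqrt{\mu-\pi^2 m^2/l_1^2}\Bigr),
\]
substitution of the two-sided bounds for $S$ reduces the problem to bounding the three power sums
\[
T_\gamma(\mu):=\sum_{m\geq 0}(\mu-\pi^2 m^2/l_1^2)_{+}^{\gamma},\qquad \gamma\in\{3/2,\,1,\,1/2\},
\]
each compared with its Riemann integral $\frac{l_1}{\pi}\mu^{\gamma+\frac{1}{2}}\cdot\frac{1}{2}B(\gamma+1,\tfrac{1}{2})$. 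The $\gamma=3/2$ integral is $\frac{3l_1}{16}\mu^2$ and combines with the prefactor $\frac{2l_2}{3\pi}$ to produce the leading Weyl term $\frac{|R|}{8\pi}\mu^2$; the $\gamma=1$ integral contributes $\frac{l_1}{3\pi}\mu^{3/2}$, which together with the half-weight endpoint contribution $\frac{l_2}{3\pi}\mu^{3/2}$ of $T_{3/2}$ at $m=0$ produces the perimeter correction $\frac{|\partial R|}{6\pi}\mu^{3/2}$. The remaining Euler--Maclaurin remainders produce the error terms of orders $\mu,\mu^{1/2},\mu^{1/4}$ appearing in \eqref{R-1-rectangle-Neumann-bounds}; the characteristic $\mu^{1/4}$ terms arise from $T_{1/2}$, because the integrand $(\mu-\pi^2 x^2/l_1^2)^{1/2}$ vanishes like a square-root at the right endpoint $x=l_1\sqrt\mu/\pi$, so the standard derivative-based first-order Euler--Maclaurin correction is singular there and must be replaced by a boundary-layer contribution of order $\sqrt{l_1\sqrt\mu/\pi}\sim\mu^{1/4}$.

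For the Dirichlet bound \eqref{R-1-rectangle-Dirichlet-bounds} I would use the elementary identity
\[
\sum_{j}(\lambda-\lambda_j^R)_{+}=R_1^N(\lambda)-\sigma_{l_1}(\lambda)-\sigma_{l_2}(\lambda)+\lambda,
\]
where $\sigma_l(\nu):=\frac{\pi^2}{l^2}S(\frac{l}{\pi}\sqrt{\nu})=\sum_{n\geq 0}(\nu-\pi^2 n^2/l^2)_{+}$; this reflects that the Dirichlet sum over $\mathbb{N}^2$ differs from the Neumann sum over $\mathbb{N}_0^2$ by exactly the two coordinate axes with the origin counted once, and both $\sigma_{l_i}$ are two-sidedly controlled by the exact formula for $S$ from the first step. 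Since each $\sigma_{l_i}(\lambda)$ carries the leading term $\frac{2l_i}{3\pi}\lambda^{3/2}$, the two subtractions precisely flip the sign of the Neumann perimeter coefficient, producing the Dirichlet Weyl term $-\frac{|\partial R|}{6\pi}\lambda^{3/2}$ and the adjusted constants appearing on each side of \eqref{R-1-rectangle-Dirichlet-bounds}.

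The principal obstacle is carrying out the Euler--Maclaurin analysis of the outer power sums sharply enough, especially for the square-root-vanishing integrand of $T_{1/2}$, to recover the explicit numerical constants $\tfrac{3\pi}{128}$, $\tfrac{\pi}{24}$, and $\tfrac{3\pi^{3/2}2^{1/2}}{64}$ appearing in the theorem, rather than merely $O(\cdot)$ remainders. This requires matching the fractional-part oscillations inherited from the exact formula for $S$ with the boundary-layer behaviour of the outer sum, and the same delicate bookkeeping must then be repeated with opposite orientation for the Dirichlet case.
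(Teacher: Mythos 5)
Your broad strategy---exploiting the product form of the spectrum, bounding the inner one-dimensional Riesz mean, and then bounding the resulting outer power sums---matches the paper. Your exact one-dimensional identity
\[
S(R)=\tfrac{2R^3}{3}+\tfrac{R^2}{2}-\tfrac{R}{6}+R\alpha(1-\alpha)+\tfrac{1}{6}\alpha(\alpha-1)(2\alpha-1)
\]
is correct and is equivalent to the paper's formulation via the sawtooth $\psi(t)=t-[t]-\tfrac12$ (substitute $\alpha=\psi+\tfrac12$ in the first line of \eqref{Riesz_psi}). Your Dirichlet reduction
\[
R_1^D(\lambda)=R_1^N(\lambda)-\sigma_{l_1}(\lambda)-\sigma_{l_2}(\lambda)+\lambda
\]
is a correct two-dimensional identity, and is a clean alternative to the paper's device of simply replacing the one-dimensional Neumann bounds by their Dirichlet analogues (subtracting $R^2$, $R^{2\beta+2}$, and $R$ respectively); both routes flip the sign of the perimeter term.

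However, there is a genuine gap in the step you yourself flag. To turn the outer sums $T_{\gamma}(\mu)=\sum_m(\mu-\pi^2m^2/l_1^2)_+^\gamma$, $\gamma\in\{3/2,1,1/2\}$, into explicit polynomial bounds with the constants $\tfrac{\pi}{24}$, $\tfrac{3\pi}{128}$, $\tfrac{\pi^{3/2}2^{1/2}}{12}$, the paper does \emph{not} use Euler--Maclaurin. For the $3/2$-power it proves \eqref{beta-Riesz-mean-lower-bound} from the linear bound \eqref{1d-Neumann-Laplacian_Riesz-mean-bis} by the integral-transform identity
\[
\frac{1}{\beta(\beta+1)}\sum_{k\geq 0}(R^2-k^2)_+^{\beta+1}
=2\int_0^\infty\sum_{k\geq 0}(s^2-k^2)_+\,s\,(R^2-s^2)_+^{\beta-1}\,ds,
\]
and for the square-root power it proves \eqref{sqrt-Riesz-mean-upper-bound} by isolating $\int_{[R]}^R t\psi(t)(R^2-t^2)^{-1/2}\,dt$ and estimating it via the substitution $\rho=[R]/R$, $\kappa=\sqrt{1-\rho^2}$. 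You acknowledge that the $T_{1/2}$ integrand has a square-root singularity at the upper endpoint and say the ``standard derivative-based Euler--Maclaurin correction is singular there and must be replaced by a boundary-layer contribution''; this is exactly the difficulty, and it is not resolved by waving at a boundary layer. The $\mu^{1/4}$ term with its precise coefficient comes from the $\sqrt{2R}/2$ in \eqref{sqrt-Riesz-mean-upper-bound}, which requires the explicit endpoint computation, not an asymptotic remainder. Your upper bound for $S(R)$ also carries a free additive constant, whereas the theorem's error terms are homogeneous of positive order in $\mu$; the paper eliminates the constant by optimizing $F(R)=(\tfrac14-\psi^2)(1-\tfrac{\psi}{3R})$ over $R$ to get the linear-in-$R$ coefficient $3/32$. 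Finally, your outline addresses only the lower-bound half of \eqref{R-1-rectangle-Neumann-bounds}; the upper bound requires applying the reversed one-dimensional estimates (and, as the paper notes, interchanging the roles of $l_1$ and $l_2$), which you do not discuss.
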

\begin{proof}
The Riesz mean for the Neumann Laplacian on $R$ is given by
\begin{equation*}
  R_1^N(z)=\underset{n_1,n_2\geq 0}{\sum\sum}\bigg(z-\frac{(\pi n_1)^2}{l_1^2}-\frac{(\pi n_2)^2}{l_2^2}\bigg)_{+}.
\end{equation*}
We need the following polynomial upper and lower bounds for
one-dimensional Riesz means $\sum(R^2 - k^2)_+^p$, in particular \eqref{1d-Neumann-Laplacian_Riesz-mean}.

\begin{lem}\label{1d-Neumann-Laplacian_Riesz-mean-lemma} For all $R>0$,
\begin{equation}\label{1d-Neumann-Laplacian_Riesz-mean-bis}
  \max\left(\frac{2R^3}{3}+\frac{R^2}{2}-\frac{R}{6},R^2\right) \leq
  \sum_{k\geq 0} (R^2-k^2)_{+}\leq \frac{2R^3}{3}+\frac{R^2}{2}+\frac{3 R}{32},
\end{equation}
and for all $R>0$, $\beta> 0$,
\begin{equation}\label{beta-Riesz-mean-lower-bound}
\begin{split}
  &\max\left(\frac{\sqrt{\pi}\,\Gamma(\beta+2)}{2\,\Gamma(\beta+5/2)}\,R^{2\beta+3}+\frac1{2}\,R^{2\beta+2}
  -\frac{\sqrt{\pi}\,\Gamma(\beta+2)}{12\,\Gamma(\beta+3/2)}\,R^{2\beta+1},R^{2\beta+2}\right)\\
  &\leq\\
  &\sum_{k\geq 0}(R^2-k^2)_{+}^{\beta+1}\\
  &\leq\\
  & \frac{\sqrt{\pi}\,\Gamma(\beta+2)}{2\,\Gamma(\beta+5/2)}\,R^{2\beta+3}+\frac1{2}\,R^{2\beta+2}
  +\frac{3\sqrt{\pi}\,\Gamma(\beta+2)}{64\,\Gamma(\beta+3/2)}\,R^{2\beta+1}.
   \end{split}
\end{equation}
Finally, for all $R>0$,
\begin{equation}\label{sqrt-Riesz-mean-upper-bound}
   \sum_{k\geq 0} \sqrt{(R^2-k^2)_{+}}\leq \frac{\pi R^2}{4}+\frac{R}{2}+\frac{\sqrt{2R}}{2}.
\end{equation}

\end{lem}

The lemma will be proved below.  Assuming it for now, we continue
the proof of the theorem for the Neumann Laplacian on the rectangle $[0,l_1]\times [0,l_2]$.
Since
\begin{equation*}
  R_1^N(z)=\frac{\pi^2}{l_2^2}\underset{n_1,n_2\geq 0}{\sum\sum}\bigg(\frac{l_2^2z}{\pi^2}-\frac{l_2^2 n_1^2}{l_1^2}- n_2^2\bigg)_{+},
\end{equation*}
by applying the lower bound \eqref{1d-Neumann-Laplacian_Riesz-mean-bis} we get:
\begin{equation*}
\begin{split}
   R_1^N(z)\geq &\frac{2\pi^2l_2}{3l_1^3}\sum_{n_1\geq 0}\bigg(\frac{l_1^2z}{\pi^2}- n_1^2\bigg)^{3/2}_{+}+\frac{\pi^2}{2l_1^2}\sum_{n_1\geq 0}\bigg(\frac{l_1^2z}{\pi^2}- n_1^2\bigg)_{+}\\
&-\frac{\pi^2}{6l_1l_2}\sum_{n_1\geq 0}\bigg(\frac{l_1^2z}{\pi^2}- n_1^2\bigg)^{1/2}_{+}.
\end{split}
\end{equation*}
Applying the lower bounds \eqref{1d-Neumann-Laplacian_Riesz-mean-bis},\eqref{beta-Riesz-mean-lower-bound} and the upper bound \eqref{sqrt-Riesz-mean-upper-bound} we get
\begin{equation*}
  \frac{2\pi^2l_2}{3l_1^3}\sum_{n_1\geq 0}\bigg(\frac{l_1^2z}{\pi^2}- n_1^2\bigg)^{3/2}_{+}\geq\frac{l_1l_2}{8\pi}\,z^2+\frac{l_2}{3\pi}\,z^{3/2}-\frac{\pi}{24}\,\frac{l_2}{l_1}\,z,
\end{equation*}
\begin{equation*}
  \frac{\pi^2}{2l_1^2}\sum_{n_1\geq 0}\bigg(\frac{l_1^2z}{\pi^2}- n_1^2\bigg)_{+}\geq \frac{l_1}{3\pi}\,z^{3/2}+\frac{z}{4}- \frac{\pi}{12 l_1}\,z^{1/2}.
\end{equation*}
and
\begin{equation*}
 -\frac{\pi^2}{6l_1l_2}\sum_{n_1\geq 0}\bigg(\frac{l_1^2z}{\pi^2}- n_1^2\bigg)^{1/2}_{+}\geq -\frac{\pi}{24}\,\frac{l_1}{l_2}\,z-\frac{\pi}{12 l_2}\,z^{1/2}-
\frac{\pi^{3/2}2^{1/2}}{12 l_2l_1^{1/2}}\,z^{1/4}.
\end{equation*}
Summarizing all estimates, we get the lower bound of \eqref{R-1-rectangle-Neumann-bounds}. Similarly, we get the upper bound of \eqref{R-1-rectangle-Neumann-bounds} interchanging $l_1$ and $l_2$. The Riesz mean for the Dirichlet Laplacian on $R$ is given by
\begin{equation*}
  R_1^D(z)=\frac{\pi^2}{l_2^2}\underset{n_1,n_2\geq 1}{\sum\sum}\bigg(\frac{l_2^2z}{\pi^2}-\frac{l_2^2 n_1^2}{l_1^2}- n_2^2\bigg)_{+}.
\end{equation*}
The corresponding one-dimensional bounds are those of Lemma \ref{1d-Neumann-Laplacian_Riesz-mean-lemma} subtracting $R^2$, $R^{2\beta +2}$,
and respectively $R$ in \eqref{1d-Neumann-Laplacian_Riesz-mean-bis}, \eqref{beta-Riesz-mean-lower-bound} and the upper bound
\eqref{sqrt-Riesz-mean-upper-bound}, leading to a change of the sign of the second term, from which we get the bounds \eqref{R-1-rectangle-Dirichlet-bounds} of the theorem.
\end{proof}

We next prove Lemma \ref{1d-Neumann-Laplacian_Riesz-mean-lemma}:
\begin{proof}
Start from the identity
\begin{equation}
  \sum_{k\geq 0} (R^2-k^2)_{+}=R^2+R^2[R]-\frac{[R]^3}{3}-\frac{[R]^2}{2}-\frac{[R]}{6},
\end{equation}
where $[R]$ denotes the integer part of $R$. We substitute the periodic sawtooth function
$\displaystyle \psi(t)=\left(t-[t]-\frac{1}{2}\right)$, in terms of which
\begin{equation}\label{Riesz_psi}
\begin{split}
  \sum_{k\geq 0} (R^2-k^2)_{+}&=\frac{2R^3}{3}+\frac{R^2}{2}-\frac{R}{6}+\left(\frac{1}{4}-\psi(R)^2\right)\left(R-\frac{\psi(R)}{3}\right)\\
&\geq \frac{2R^3}{3}+\frac{R^2}{2}-\frac{R}{6},
\end{split}
\end{equation}
since both factors of the product are nonnegative. This lower bound is exact when $R$ is an integer. Since $\displaystyle \sum_{k\geq 0} (R^2-k^2)_{+}=R^2$
trivially for all $0<R<1$, the lower bound follows. For the upper bound, we wish to replace $\left(\frac{1}{4}-\psi(R)^2\right)\left(R-\frac{\psi(R)}{3}\right)$ by a linear
expression in $R$ for $R\ge0$, or, equivalently, find an upper bound for
$$
F(R) := \left(\frac 1 4 - \psi(R)^2\right)\left(1 - \frac{\psi(R)}{3 R}\right).
$$
Because on each interval $(n, n+1)$ the function $\psi(R)$ is antisymmetric about $n + \frac 1 2$ and
negative on $(n, n+\frac 1 2 )$, the maximum is to be sought in an interval of the form
$(n, n+\frac 1 2 )$.  On these subintervals, the second factor decreases when $R$ is replaced by
$R+1$, while the first factor is positive and unchanged.  Hence, the maximum of
$F(R)$ occurs where $0 < R < \frac 1 2$.
In this interval, however, an elementary calculus exercise shows that the maximizing value is
$R = \frac 3 8$, and thus $F(R) \le F(\frac 3 8 ) = \frac{25}{96}$.  Substituting this into the
first line of \eqref{Riesz_psi} yields the claim. We observe that the upper and lower bounds in \eqref{1d-Neumann-Laplacian_Riesz-mean-bis} coincide uniquely when $R= \frac 3 8$.
\noindent
To prove \eqref{beta-Riesz-mean-lower-bound} we note that for all $\beta>0$,
\begin{equation*}
\begin{split}
   \int_0^{\infty} \sum_{k\geq 0}(R^2-t-k^2)_{+}t^{\beta-1}\,dt&=\frac1{\beta(\beta+1)}\sum_{k\geq 0}(R^2-k^2)_{+}^{\beta+1}\\
   &=2\int_0^{\infty}\sum_{k\geq 0}(s^2-k^2)_{+}\,s\,(R^2-s^2)_{+}^{\beta-1}\;ds,
   \end{split}
\end{equation*}
and then apply the bounds \eqref{1d-Neumann-Laplacian_Riesz-mean-bis}.
\noindent
It remains to show \eqref{sqrt-Riesz-mean-upper-bound}. We start from the identity
\begin{equation*}
  \sum_{k\geq 0} \sqrt{(R^2-k^2)_{+}}=\frac{\pi R^2}{4}+\frac{R}{2}-\int_0^Rt(R^2-t^2)^{-1/2}
\left(t-[t]-\frac1{2}\right)\;dt.
\end{equation*}
For any continuous increasing function $f:[0,R]\rightarrow \mathbb{R}$ and any positive integer $k\leq R$,
\begin{equation}\label{psi-increasing-fct-ineq-1}
  \int_{k-1}^k\psi(t)f(t)\,dt=\int_{0}^{\frac1{2}}\left(\frac1{2}-s\right)\left(f(k-s)-f(k-1+s)\right)\,ds\geq 0.
\end{equation}
Consequently,
\begin{equation}
  \sum_{k\geq 0} \sqrt{(R^2-k^2)_{+}}\leq\frac{\pi R^2}{4}+\frac{R}{2}-\int_{[R]}^Rt(R^2-t^2)^{-1/2}
\left(t-[t]-\frac1{2}\right)\;dt.
\end{equation}
The integral between $[R]$ and $R$ can also be computed explicitly. Define $\displaystyle \rho=\frac{[R]}{R}$ and $\displaystyle \kappa=\sqrt{1-\rho^2}$. Then for all $R>0$ we have $\displaystyle 1-\min\left(1,\frac{1}{R}\right)
\leq \rho\leq 1$.
Hence $\displaystyle 0<\kappa < 1$ if $R<1$ and $\displaystyle 0<\kappa < R^{-1}\sqrt{2R-1}$ otherwise. Then
\begin{equation*}
  \begin{split}
  \int_{[R]}^R\frac{t\,\psi(t)}{\sqrt{R^2-t^2}}\;dt&=R^2\int_{\rho}^1\frac{s^2-\rho s-\frac{s}{2R}}{\sqrt{1-s^2}}\,ds\\
  &=\frac{R^2}{2}\bigg(\arcsin \kappa -\kappa\sqrt{1-\kappa^2}\bigg)-\frac{R}{2}\kappa.
\end{split}
\end{equation*}
We also note that $\displaystyle \kappa\mapsto\arcsin \kappa -\kappa\sqrt{1-\kappa^2}-\frac{2\kappa^3}{3}$ is increasing. It follows that
\begin{equation*}
  \int_{[R]}^R\frac{t\,\psi(t)}{\sqrt{R^2-t^2}}\;dt\leq-\frac{R^2\kappa^3}{3}+\frac{R\kappa}{2},
\end{equation*}
proving the claim.
\end{proof}

\section{Two-term estimates for Dirichlet Laplacians by averaging}
{For Dirichlet Laplacians on a bounded domain $\Omega$ our strategy will be to
enclose $\Omega$ in a box $B$ and then to use the averaged variational principle to
estimate the Riesz means of the Dirichlet Laplacian on $B$ in terms of expectations
with the eigenfunctions of $-\Delta_{\Omega}$.  Thus
suppose that $\Omega\subset B$ where $\displaystyle B=\prod_{\alpha=1}^d]0,L_{\alpha}[$ is a box of volume $\displaystyle |B|=\prod_{\alpha=1}^dL_{\alpha}$.
We let $v_k^{\Omega}$ denote the Dirichlet eigenfunctions on $\Omega$,
and, similarly, for $B$ we define
\begin{equation*}
 v_k^B(x)=\prod_{\alpha=1}^d\psi_{n_{\alpha}}(x_{\alpha}),
\end{equation*}
where $\psi_{n_{\alpha}}(x_{\alpha}) := \sqrt{\frac{2}{l_\alpha}} \sin\left(\frac{n \pi x_\alpha}{l_\alpha}\right)$,
corresponding to eigenvalues $\displaystyle \lambda_k^B=\prod_{\alpha=1}^d\frac{\pi^2n_{\alpha}^2}{l_{\alpha}^2}$ with $n_{\alpha}\in\mathbb{Z}_{+}$.}
By the variational principle,
\begin{equation}
  \sum (z-\lambda_j^{B})_{+}\big|\langle v_k^{\Omega},v_j^B\rangle_{B}\big|^2\geq z\int_{B}|v_k^{\Omega}|^2\,dx-\int_{B}|\nabla v_k^{\Omega}|^2\,dx.
\end{equation}
Since $v_k^{\Omega}\in H_0^1(\Omega)$, all integrals reduce to integrals on $\Omega$.
On the right side we take a finite sum in $k$ while on the left we
sum over all $k$ and apply the completeness relation, obtaining
\begin{equation}\label{AVP-Dirichlet-Omega-Box-1}
  \sum (z-\lambda_j^{B})_{+}\int_{\Omega}|v_j^B(x)|^2\,dx\geq \sum (z-\lambda_j^{\Omega})_{+}.
\end{equation}
To apply the translation argument as above we suppose that $l_\alpha$ is at least twice the
width of $\Omega$ in the $\alpha$ direction and note that the average of
$\psi_{n_\alpha}^2(x_\alpha)$ and its translate by $\frac{l_\alpha}{2}$ is $\frac{1}{l_\alpha}$.
Therefore,
\begin{equation}\label{AVP-Dirichlet-Omega-Box-2}
  \frac{|\Omega|}{|B|}\sum (z-\lambda_j^{B})_{+}\geq \sum (z-\lambda_j^{\Omega})_{+},
\end{equation}
which improves Berezin-Li-Yau.
Consider, for example, the case $d=2$ where applying the upper bound in \eqref{R-1-rectangle-Dirichlet-bounds} of theorem \ref{thm-ev-rectangle} for the Dirichlet Laplacian on a rectangle $B$ with side lengths $l_1,l_2$ we obtain the explicit upper bound
\begin{equation}\label{Dirichlet-Box-2d-lower bound}
  \sum (\lambda-\lambda_j^{\Omega})_{+}\leq L_{1,2}^{cl}|\Omega|\,\lambda^2-\frac1{4}\,L_{1,1}^{cl}\frac{|\partial B|
|\Omega|}{|B|}\,\lambda^{3/2}+\, F(l_1,l_2,\lambda)|\Omega|,
\end{equation}
where $F(l_1,l_2,\lambda)$ is shorthand notation for the lower-order terms of the left side in \eqref{R-1-rectangle-Dirichlet-bounds}

\appendix
\section{Refinements of Young's and H\"{o}lder's inequality}
In \S \ref{refinement}, we rely on an improvement of Young's inequality in order to
strengthen Kr\"oger's inequality with \eqref{Neumann-Laplacian-ev-bound-normalized2}.
Improvements of Young's inequality that are adequate for this purpose already exist in
the literature \cite{Ald,KiMa,Fur}, but we take the opportunity in this appendix to
present an efficient approach to deriving improvements to
Young's and H\"older's inequalities.

To begin, let $p>-1$. For $x\geq 0$ we define the strictly concave function $y_p(x)$ by
\begin{equation}\label{Young-fct}
  y_p(x):=(p+1)x-p-x^{p+1}.
\end{equation}
The unique critical point of $y_p(x)$ occurs at $x=1$.
Since $y_p(1)=0$, Young's inequality follows in the following formulation:
\begin{enumerate}
  \item $y_p(x)\leq 0$ for all $x\geq 0$ if $p\geq 0$
  \item $y_p(x)\geq 0$ for all $x\geq 0$ if $-1<p\leq 0$.
\end{enumerate}
Before deriving an improvement, we first note that
the case $-1<p\leq 0$ is equivalent to the case $p\geq 0$ by means of the duality
\begin{equation*}
  y_p(x)=-(p+1)y_q(z),\quad (p+1)(q+1)=1,\quad z=x^{q+1},
\end{equation*}
the fixed point of which is the trivial case $p=q=0$.
In the following we therefore only consider the case $p>0$.
Putting $\displaystyle x=a/b^{1/p}$,
defining $s=p+1$, $\displaystyle r=\frac{p+1}{p}$, such that $\displaystyle \frac1{r}+\frac1{s}=1$,
and dividing by $p+1$, we obtain the classical version of Young's inequality:
\begin{equation}\label{Young-ineq-2}
  ab-\frac{b^{r}}{r}-\frac{a^{s}}{s}\leq 0, \quad a,b\geq 0.
\end{equation}
There are basically two refinements discussed in
\cite{Ald,KiMa,Fur},
which as we shall show
follow directly from identities for the functions $y_p(x)$.
First, we consider the family of functions $f_p$ defined by
\begin{equation}\label{Young-fct-refinement1}
  f_p(x):=y_p(x)+\left(x^{(p+1)/2}-1\right)^2=2y_{(p-1)/2}(x).
\end{equation}
Clearly
\begin{enumerate}
  \item $f_p(x)\leq 0$ for all $x>0$ if $p\geq 1$,
  \item $f_1(x)=0$ for all $x>0$,
  \item $f_p(x)\geq 0$ for all $x>0$ if $0<p\leq 1$.
\end{enumerate}
When $p\geq 1$ we have $s=p+1\geq 2$, and with $\displaystyle x=a/b^{1/p}$
the refinement of Young's inequality becomes:
\begin{equation}\label{Young-ineq-2-refined1}
  ab-\frac{b^{r}}{r}-\frac{a^{s}}{s}\leq -\frac1{s}\left(a^{s/2}-b^{r/2}\right)^2, \quad a,b\geq 0,\quad s\geq 2\geq r>1.
\end{equation}
When $0\leq p\leq 1$ the inequality is reversed. Exchanging $a$ and $b$ as well as $r$ and $s$, we get:
\begin{equation}\label{Young-ineq-2-reversed1}
  ab-\frac{b^{r}}{r}-\frac{a^{s}}{s}\geq -\frac1{r}\left(a^{s/2}-b^{r/2}\right)^2, \quad a,b\geq 0,\quad s\geq 2\geq r>1.
\end{equation}
Another refinement follows from considering the
family of functions $g_p$ defined by
\begin{equation}\label{Young-fct-refinement2}
  g_p(x)=y_p(x)+p(x-1)^2=px^2-(p-1)x-x^{p+1}=x\,y_{p-1}(x).
\end{equation}
We observe that
\begin{enumerate}
  \item $g_p(x)\leq 0$ for all $x>0$ if $p\geq 1$,
  \item $g_1(x)=0$ for all $x>0$,
  \item $g_p(x)\geq 0$ for all $x>0$ if $0<p\leq 1$.
\end{enumerate}
When $p\geq 1$ we have $s=p+1\geq 2$, and with $\displaystyle x=a/b^{1/p}$ the refinement of Young's inequality becomes:
\begin{equation}\label{Young-ineq-2-refined2}
  ab-\frac{b^{r}}{r}-\frac{a^{s}}{s}\leq -\frac1{r}\left(a-b^{r-1}\right)^2b^{2-r}, \quad a,b\geq 0,\quad s\geq 2\geq r>1.
\end{equation}
When $0\leq p\leq 1$ we find a reversed inequality. Exchanging $a$ and $b$ as well as $r$ and $s$, we obtain:
\begin{equation}\label{Young-ineq-2-reversed2}
  ab-\frac{b^{r}}{r}-\frac{a^{s}}{s}\geq -\frac1{s}\left(b-a^{s-1}\right)^2a^{2-s}, \quad a,b\geq 0,\quad s\geq 2\geq r>1.
\end{equation}
Although we do not use it in this paper, we further note that
refinements of H\"{o}lder's inequality,
cf.\ \cite{CaFrLi},
are easily obtained from the inequalities above
as follows:

Let $M$ be a measure space and $a\in L^s(M)$, $b\in L^r(M)$ such that $\|a\|_s=\|b\|_r=1$ where $r^{-1}+s^{-1}=1$, $s\geq 2\geq r>1$ and $\|\cdot\|_p$ denotes the usual norm in $L^p(M)$. Then by integrating the pointwise inequalities \eqref{Young-ineq-2-refined1} and \eqref{Young-ineq-2-reversed1},
\begin{equation}\label{Hoelder-refined-1a}
  1-\frac1{r}\int\left(|a|^{s/2}-|b|^{r/2}\right)^2\leq\int |ab|\leq 1-\frac1{s}\int\left(|a|^{s/2}-|b|^{r/2}\right)^2,
\end{equation}
with equality if and only if $|a|^{s}=|b|^{r}$ pointwise almost everywhere. We also may directly make the replacements $a\rightarrow t^{-1}a$, $b \rightarrow tb$ in \eqref{Young-ineq-2-refined1} and \eqref{Young-ineq-2-reversed1} and after integration optimize with respect to $t$. This yields the slightly improved inequalities:
\begin{equation}\label{Hoelder-refined-1b}
  \left(1-\frac1{2}\int\left(|a|^{s/2}-|b|^{r/2}\right)^2\right)^{2/r}\leq\int |ab|\leq \left(1-\frac1{2}\int\left(|a|^{s/2}-|b|^{r/2}\right)^2\right)^{2/s}.
\end{equation}
When integrating the pointwise inequalities \eqref{Young-ineq-2-refined2} and \eqref{Young-ineq-2-reversed2}:
\begin{equation}\label{Hoelder-refined-1c}
  1-\frac1{s}\int\left(|b|-|a|^{s-1}\right)^2|a|^{2-s}\leq\int |ab|\leq 1-\frac1{r}\int\left(|a|-|b|^{r-1}\right)^2|b|^{2-r},
\end{equation}
with equality if and only if $|a|^{s}=|b|^{r}$ pointwise almost everywhere.

% ----------------------------------------------------------------
\bibliographystyle{amsplain}
\bibliography{references}

\end{document}